\newtheorem{theorem}{Theorem}[section]
\newtheorem{proposition}[theorem]{Proposition}
\newtheorem{corollary}[theorem]{Corollary}
\newtheorem*{thmA}{Theorem}
\newtheorem*{thmB}{Main Theorem}
\theoremstyle{definition}
\newtheorem{definition}[theorem]{Definition}
\newtheorem{example}[theorem]{Example}
\theoremstyle{remark}
\newtheorem{remark}[theorem]{Remark}
\numberwithin{equation}{section}
\newcommand{\F}{\ensuremath{\mathcal{F}}}
\newcommand{\metric}{\ensuremath{ \mathrm{g} }}
\newcommand{\fol}{\ensuremath{\mathcal{F}}}
\newcommand{\ol}{\overline}
\newcommand{\wt}{\widetilde}
\newcommand{\lra}{\longrightarrow}
\newcommand{\In}{\subseteq}
\newcommand{\RR}{\mathbb{R}}
\newcommand{\DD}{\mathbb{D}}
\newcommand{\pr}{\ensuremath{\mathsf{p}}}
\newcommand{\OO}{\ensuremath{\mathsf{O}}}
\newcommand{\base}{B}
\title[]{Closure of singular foliations: the proof of Molino's conjecture}
\author{Marcos M. Alexandrino}
\author{Marco Radeschi}
\address{Marcos M. Alexandrino \hfill\break\indent 
Universidade de S\~{a}o Paulo\\
Instituto de Matem\'{a}tica e Estat\'{\i}stica, \hfill\break\indent
 Rua do Mat\~{a}o 1010,05508 090 S\~{a}o Paulo, Brazil}
\email{marcosmalex@yahoo.de, malex@ime.usp.br}
\address{ Marco Radeschi\hfill\break\indent 
Mathematisches Institut\\
 WWU M\"unster, Einsteinstr. 62, M\"unster, Germany.}
\email{mrade\_02@uni-muenster.de}
\thanks{The first author was  partially supported by FAPESP. The second author is part of the DFG project SFB 878: Groups, Geometry \& Actions.}
\subjclass[2000]{Primary 53C12, Secondary 57R30}
\keywords{Singular Riemannian foliation, linearization, Molino's conjecture}
\begin{document}

\maketitle

\begin{abstract}
In this paper we prove the conjecture of Molino that for every singular Riemannian foliation $(M,\fol)$, the partition $\ol{\fol}$ given by the closures of the leaves of $\F$ is again a singular Riemannian foliation.
\end{abstract}

\section{Introduction}

Given a Riemannian manifold $M$, a singular Riemannian foliation $\fol$ on $M$ is, roughly speaking, a partition of $M$ into smooth connected and locally equidistant submanifolds of possibly varying dimension (the \emph{leaves} of $\fol$), which is spanned by a family of smooth vector fields. The precise definition, given in Section \ref{S:prelim}, was suggested by Molino, by combining the concepts of \emph{transnormal system} of Bolton \cite{Bolton} and of \emph{singular foliation} by Stefan and Sussmann \cite{Sussmann}.

A typical example of a singular Riemannian foliation is the decomposition of a Riemannian manifold $M$ into the orbits of an isometric group action $G$ on $M$. Such a foliation is called \emph{homogeneous}. Another example of foliation is given by the partition of an Euclidean vector bundle $E\to L$, endowed with a metric connection, into the \emph{holonomy tubes} around the zero section (cf. Example \ref{E:foliation-vector-bundle}). Such a foliation, which we call \emph{holonomy foliation}, will be a sort-of prototype in the structural results that will appear later on. Holonomy foliations are in general not homogeneous (the zero section $L$ is always a leaf but in general not a homogeneous manifold), however they are \emph{locally homogeneous}, in the sense that  the infinitesimal foliation at every point of $E$ is homogeneous (cf. Sections \ref{SS:inf-foliation} and \ref{SS:orbit-like}). This construction is related to other important types of foliations, like polar foliations \cite{toeben} or Wilking's dual foliation to the Sharafutdinov projection \cite{Wilking}, see Remark \ref{R:eamples}.

In general, the leaves of a singular Riemannian foliation might not be closed, even in the simple cases defined above. In the homogeneous case, consider for example the foliation on the flat torus $T^2$ by parallel lines, of irrational slope. These are non closed orbits, of an isometric $\RR$-action on $T^2$.

Given a (regular) Riemannian foliation $(M,\fol)$ with non-closed leaves, Molino proved that replacing the leaves of $\fol$ with their closure yields a new singular Riemannian foliation $\ol{\fol}$. Moreover, he conjectured that the same result should hold true if one starts with a singular Riemannian foliation, and this has become known, in the last decades, as \emph{Molino's Conjecture}.

Molino proved that the closure $\ol{\fol}$ of a singular Riemannian foliation $(M,\fol)$ is a transnormal system \cite{Molino}, thus leaving to prove that it is a singular foliation as well. Moreover, in \cite{Molino2} he suggested a strategy to prove the conjecture for the case of \emph{orbit-like foliations}, i.e. foliations which, roughly speaking, are locally diffeomorphic to the orbits of some proper isometric group action around each point (cf. Section \ref{SS:orbit-like}). A formal alternative proof in this case can be found in [4]. Molino's conjecture was also proved for \emph{polar foliations} and then \emph{infinitesimally polar foliations} in \cite{Alexandrino-molino-polar} and \cite{Alexandrino-Lytchak}, respectively.

These partial results do not cover every possible foliation. Since the Eighties there are examples of non orbit-like foliations, and in recent years it was shown the existence of a remarkably large class of ``infinitesimal'' foliations that are neither homogeneous nor polar, the so-called Clifford foliations \cite{Radeschi} (these infinitesimal foliations have been shown, however, to have an algebraic nature, cf. \cite{Lytchak-Radeschi}). Therefore, it is important to give a complete answer to the conjecture, to fully understand the semi-local dynamic of singular Riemannian foliations.
\\

The goal of this paper is to prove the full Molino's conjecture.

\begin{thmA}(Molino's Conjecture)
Let $(M,\fol)$ be a singular Riemannian foliation on a complete manifold $M$, and let $\ol{\fol}=\{\ol{L}\mid L\in \fol\}$ be the partition of $M$ into the closures of the leaves of $\fol$. Then $(M,\ol{\fol})$ is a singular Riemannian foliation.
\end{thmA}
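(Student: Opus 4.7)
The plan is first to exploit Molino's own result that $\ol\fol$ is a transnormal system, so that every leaf closure $\ol L$ is already a smooth locally-equidistant submanifold of $M$; what remains to verify is the Stefan--Sussmann smoothness of the partition, namely that for every $p\in M$ the tangent space $T_p \ol L_p$ is spanned by values at $p$ of smooth vector fields that are tangent to the leaves of $\ol\fol$ throughout $M$. Since this is a purely local condition, it suffices to work in a saturated tubular neighborhood of an arbitrary leaf $L$, and the entire strategy will be organized around the linearization theorem for singular Riemannian foliations.

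Applying the linearization theorem, I replace a saturated tubular neighborhood $\tub(L)$ by the model of a \emph{holonomy foliation} on the normal bundle $\pi\colon \nu L\to L$. This model is controlled by two pieces of data: the \emph{infinitesimal foliation} $\fol_V$ on a typical fiber $V=\nu_p L$, and the holonomy group $H\subseteq \OO(V)$ of the normal connection, which acts on $V$ by foliated isometries of $\fol_V$. The $\fol$-leaf through $v\in V$ is then obtained by first saturating the $\fol_V$-leaf through $v$ under $H$ and then parallel-transporting the result along loops in $L$. In this picture the closure of a leaf is governed by two separate closures: the closure $\ol H\subseteq \OO(V)$ in the compact orthogonal group, and the closure $\ol{\fol_V}$ of the infinitesimal foliation inside the slice.

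The next step is to manufacture enough smooth foliated vector fields tangent to $\ol\fol$. One source is the Lie algebra of $\ol H$: each element gives an action vector field on $V$ tangent to $\ol{\fol_V}$, and these extend via the associated-bundle construction and parallel transport to smooth foliated vector fields on $\nu L$, accounting for the ``holonomy-closure'' direction. A second source is smooth vector fields on $V$ tangent to $\ol{\fol_V}$ themselves; after averaging over the compact group $\ol H$ they become $\ol H$-equivariant, and so extend uniquely to smooth foliated vector fields on $\nu L$. The two families together should span $T_v\ol L_v$ at every slice point, reducing the whole conjecture to the analogous statement for an infinitesimal foliation on a Euclidean ball.

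The principal obstacle is precisely this infinitesimal case. An infinitesimal foliation is a singular Riemannian foliation on a Euclidean ball whose leaves lie on concentric round spheres, and by the Lytchak--Radeschi theorem it has a strongly algebraic nature. The plan here is to induct by iterating the same procedure: inside $V$, pick a non-closed leaf of $\fol_V$, linearize around it to produce a strictly smaller infinitesimal foliation on a lower-dimensional slice together with a new finite-dimensional holonomy group, and continue. The induction terminates once the encountered holonomy group is already closed, and the construction is then unwound level by level. The technical heart of the argument will be to verify that the vector fields produced at each level extend smoothly along the tower of linearizations without breaking tangency to the successive leaf closures, and to keep careful track of how the closures at different levels interact; this is where the real work, and the most delicate combination of the linearization theorem with $\ol H$-averaging, is to be expected.
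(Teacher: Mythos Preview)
Your proposal has a genuine gap at its foundation. You invoke a ``linearization theorem'' to identify $\fol$ in a tubular neighborhood of $L$ with the holonomy-translates of the infinitesimal slice foliation $\fol_V$ under some normal connection. No such structure theorem is available for general singular Riemannian foliations; Example~\ref{E:prototype} in the paper is offered as a \emph{prototype}, not as a description of an arbitrary $\fol$ near a leaf. What \emph{is} true is that the \emph{linearized} foliation $\fol^\ell$ (a genuine sub-foliation of $\fol$, obtained from the first-order parts of the vertical vector fields) has this kind of local product form; see Proposition~\ref{P:local-form}. The distinction matters: away from $\ol L$ the leaves of $\fol$ can differ from those of $\fol^\ell$, so your subsequent claim that $\ol\fol$ factorizes as ``$\ol H$ together with $\ol{\fol_V}$'' is unsupported, and the averaging/induction scheme built on it never gets started.

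The paper sidesteps this by never trying to model $\fol$ itself. It constructs $\fol^\ell$, takes its \emph{local closure} $\widehat{\fol}^\ell$ (replacing $H_p$ by $\ol H_p$ fiberwise), and proves that $\widehat{\fol}^\ell$ is an orbit-like singular Riemannian foliation with respect to a new metric $\metric^\ell$. The only facts needed about its relation to $\fol$ are the inclusions $\widehat{\fol}^\ell|_{\ol L}=\fol|_{\ol L}$ and $\ol{\widehat{\fol}^\ell}\subseteq\ol\fol$. Then, for a vector $v\in\nu(L,\ol L)$, the already-established Molino conjecture for orbit-like foliations (Theorem~1.6 of \cite{Alexandrino-Radeschi-smooth-flows}) produces a smooth extension tangent to $\ol{\widehat{\fol}^\ell}$, hence to $\ol\fol$. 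There is no induction on slice dimension and no need to control $\ol{\fol_V}$; the reduction is to the orbit-like case, not to a smaller infinitesimal foliation. Your inductive plan might be salvageable if you replace $\fol$ by $\fol^\ell$ throughout and then argue that this suffices, but at that point you are essentially reproving the Main Theorem and still need the orbit-like result as the terminal step.
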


This result is in fact a direct consequence of the following.

\begin{thmB}
Let $(M,\fol)$ be a singular Riemannian foliation, let $L$ be a (possibly not closed) leaf, and let $U$ be an $\epsilon$-neighbourhood around the closure of $L$. Then for $\epsilon$ small enough, there is a metric $\metric^\ell$ on $U$ and a singular foliation $\widehat{\fol}^\ell$, such that:
\begin{enumerate}
\item $(U,\metric^\ell,\widehat{\fol}^\ell)$ is an orbit-like singular Riemannian foliation.
\item The foliation $\widehat{\fol}^\ell$ coincides with $\fol$ on $\ol{L}$.
\item The closure of $\widehat{\fol}^\ell$ is contained in the closure of $\fol$.
\end{enumerate}
\end{thmB}

In short, the foliation $\widehat{\fol}^\ell$ is obtained by first constructing the \emph{linearized foliation} $\fol^\ell$ of $\fol$ in $U$, which is a subfoliation of $\fol$ spanned by the first order approximations, around $L$, of the vector fields tangent to $\fol$ (see Section \ref{SS:lin-foliation} for a precise definition). The foliation $\widehat{\fol}^\ell$ is then obtained from $\fol^\ell$ by taking the ``local closure'' of the leaves of $\fol^\ell$. The foliations $\fol,\,\fol^\ell,\,\widehat{\fol}^\ell$, together with their closures, are then related by the following inclusions:
\smallskip

\begin{center}
$\renewcommand{\arraystretch}{1.3}
\begin{array}[c]{ccccc}
\vspace{-0.1in} \fol&\supseteq &\fol^\ell&\subseteq&\widehat{\fol}^\ell\\
\vspace{0.1in}\rotatebox{-90}{$\subseteq$}&&\rotatebox{-90}{$\subseteq$}&&\rotatebox{-90}{$\subseteq$}\\
\ol{\fol}&\supseteq &\ol{\fol}^\ell&=&\ol{\widehat{\fol}}{}^\ell\\
\end{array}$
\end{center}

\begin{example}
\label{E:prototype}
Consider an Euclidean vector bundle $E$ over a complete Riemannian manifold $L$, with a metric  connection $\nabla^{E}$ and a connection metric $g^{E}$ (cf. Example \ref{E:foliation-vector-bundle}). Let $H_p$ denote the holonomy group of $(E,\nabla^E)$ at $p$, acting by isometries on the Euclidean fiber $E_p$, and let $(E_p,\F^0_p)$ be a singular Riemannian foliation preserved by the $H_p$-action. Finally, let $K_p$ be the maximal connected group of isometries of $E_{p}$ that
fixes each leaf of $\F^{0}_{p}$ as a set.  

Letting $\F$ the partition of $E$ into the holonomy translates of the leaves of $\F^0_p$ (i.e., for every leaf $\mathcal{L}\in \F^0_p$, $L_{\mathcal{L}}$ denotes the set of points in $E$ that can be reached via $\nabla^E$-parallel translation from a point in $\mathcal{L}$), then $\F$ is a singular Riemannian foliation. In this case, the linearized foliation $\fol^\ell$ is the foliation by the holonomy translates of the $K_p$-orbits in $E_p$, and the local closure of  $\widehat{\fol}^\ell$ is the foliation by the holonomy translates of the $\overline{K}_p$-orbits in $E_p$, where $\overline{K}_p$ denotes the closure of $K_p$ in $\OO(E_p)$.

This can be restated in the language of groupoids: defining $H$ the holonomy groupoid of the connection $\nabla^E$, then $\F=\{H(\mathcal{L})\mid \mathcal{L}\in \F^0_p\}$, $\F^\ell$ is given by the orbits of $HK_p$, and its local closure $\hat{\F}^\ell$ is given by the orbits of $H\overline{K}_p$. 
\end{example}

This paper is organized as follows: after a section of preliminaries (Section \ref{S:prelim}) we show how the Molino's Conjecture follows from the Main Theorem (Section \ref{S:Molino}). In Section  \ref{S:setup} we fix the setup in which we work for the rest of the paper. In Section \ref{S:distributions} we define three distributions of the tangent bundle $TU$. We first use these to obtain information on the local structure of $\fol^\ell$ and define the local closure $\widehat{\fol}^\ell$ (Section \ref{S:linearized}) and then to define the metric $\metric^\ell$ used in the Main Theorem (Section \ref{S:proof}). In this final section we also prove the Main Theorem.

\section*{Acknowledgements}
The authors thank Prof. Lytchak and Prof. Thorbergsson for consistent support.

\section{Preliminaries}\label{S:prelim}
Given a Riemannian manifold $(M,\metric)$, a partition $\fol$ of $M$ into complete connected submanifolds (the \emph{leaves} of $\fol$) is called a \emph{transnormal system} if geodesics starting perpendicular to a leaf stay perpendicular to all leaves, and a \emph{singular foliation} if every vector tangent to a leaf can be locally extended to a vector field everywhere tangent to the leaves.
A singular Riemannian foliation will be denoted by the triple $(M,\metric,\fol)$. However, if the Riemannian metric of $M$ is understood, we will drop it and simply write $(M,\fol)$.

The following notation will be used throughout the rest of the paper. Given a point $p\in M$, the leaf of $\fol$ through $p$ will be denoted by $L_p$. A small relatively compact open subset $P\subset L$ is called a \emph{plaque}. The tangent and normal spaces to $L_p$ at $p$ are denoted by $T_pL_p$ and $\nu_pL_p$, respectively. Given some $\epsilon>0$, $\nu^\epsilon_pL_p$ denotes the set of vectors $x\in \nu_pL_p$ with norm $<\epsilon$. If $\epsilon$ is small enough that the normal exponential map $\exp:\nu_p^\epsilon L_p\to M$ is a diffeomorphism onto the image, such image is called a \emph{slice} of $L_p$ at $p$, and it is denoted by $S_p$. The \emph{slice foliation} $\fol|_{S_p}$ denotes the partition of $S_p$ into the connected components of the intersections $L\cap S_p$, where $L\in \fol$.

\subsection{Vector fields of a singular Riemannian foliation}
We review here the main notations about vector fields of a singular Riemannian foliation.

A vector field $V$ is called \emph{vertical} if it is tangent to the leaves at each point. The set of smooth vertical vector fields is a Lie algebra, which is denoted by $\mathfrak{X}(M,\fol)$. 

A vector field $X$ is called \emph{foliated} if its flow takes leaves to leaves or, equivalently, if $[X, V]\in \mathfrak{X}(M,\fol)$ for every $V\in \mathfrak{X}(M,\fol)$. Any vertical vector field is foliated, but there are other foliated vector fields. A vector field is called \emph{basic} if it is both foliated and everywhere normal to the leaves.

\subsection{Homothetic Transformation Lemma}

One of the most fundamental results in the theory of singular Riemannian foliations is the Homothetic Transformation Lemma. A deeper discussion of this lemma, with proof and applications, can be found in Molino \cite{Molino}, Ch. 6, in particular Lemma 6.1 and Proposition 6.7.

Let $(M,\fol)$ be a singular foliation, let $L$ be a leaf of $\fol$, and let $P\subset L$ a plaque. Let $\epsilon>0$ be such that the normal exponential map $\exp:\nu^\epsilon P\to M$ is a diffeomorphism onto its image $B_{\epsilon}(P)$. For any two radii $r_1, r_2=\lambda r_1$ in $(0,\epsilon)$, it makes sense to define the \emph{homothetic transformation}
\[
h_{\lambda}:B_{r_1}(P)\to B_{r_2}(P),\qquad h_{\lambda}(\exp v)=\exp \lambda v.
\]
The leaves of $\fol$ intersect $B_{r_i}(P)$, $i=1,2$, in plaques that foliate $B_{r_i}(P)$. We call $\fol|_{B_{r_i}(P)}$ the foliation of $B_{r_i}(P)$ into the path components of such intersections. One has then the following:

\begin{theorem}[Homothetic Transformation Lemma]
The homothetic transformation $h_{\lambda}$ takes the leaves of $(B_{r_1}, \fol|_{B_{r_1}(P)})$ onto the leaves of $(B_{r_2}, \fol|_{B_{r_2}(P)})$.
\end{theorem}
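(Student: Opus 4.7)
The plan is to prove the lemma by showing that $h_\lambda$ carries the module $\mathfrak{X}(B_{r_1}(P),\fol)$ of vertical vector fields into (a rescaling of) the module $\mathfrak{X}(B_{r_2}(P),\fol)$, and then use that leaves are exactly the accessible sets of such vector fields. First I would work in the normal exponential chart on $\nu^\epsilon P$: every point of $B_\epsilon(P)$ is uniquely of the form $\exp(v)$ with $v\in\nu^\epsilon P$, and in these coordinates $h_\lambda$ is fibrewise scaling $v\mapsto\lambda v$. In particular, the horizontal geodesics emanating from $P$ are the rays $\gamma_v(t)=\exp(tv)$, and $h_\lambda$ sends them to reparametrisations of themselves. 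The transnormal hypothesis then ensures that for every admissible $t$, the radial direction $\gamma_v'(t)$ is perpendicular to the leaf through $\gamma_v(t)$; this is the one geometric input controlling what happens as one slides along horizontal geodesics.

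Next, given a vertical vector field $V\in\mathfrak{X}(B_{r_1}(P),\fol)$, I would build a candidate vector field $V^\lambda$ on $B_{r_2}(P)$ by setting, along each normal ray,
\[
V^\lambda(\exp(\lambda v)) \;:=\; \lambda\,(dh_\lambda)_{\exp v}\bigl(V(\exp v)\bigr),
\]
and extending smoothly. The $\lambda$-factor is chosen so that the flow of $V^\lambda$ is conjugate, via $h_\lambda$, to the flow of $V$. With this in place, the lemma reduces to checking that $V^\lambda$ is itself vertical: granting this, every plaque $P'\subseteq L\cap B_{r_1}(P)$ is sent by $h_\lambda$ into a plaque $Q'\subseteq L'\cap B_{r_2}(P)$, because the flow lines of $V$ inside $P'$ map to flow lines of $V^\lambda$ inside $Q'$; running the same argument with $h_{1/\lambda}$ yields the reverse inclusion and therefore equality of leaves.

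The main work is to verify that $V^\lambda$ is indeed tangent to the leaves of $\fol|_{B_{r_2}(P)}$. Here the plan is to combine the transnormal property with the singular foliation axiom (existence of smooth vertical extensions) through the Gauss lemma. By Gauss, the orthogonal decomposition of each tangent space into the radial direction $\gamma_v'$ and the directions tangent to the geodesic sphere $\exp(S_r)$ is preserved under the scaling $v\mapsto\lambda v$; by transnormality, in this splitting the leaves only have components in the second summand. Along a horizontal geodesic, the values of $V^\lambda$ are then limits of difference quotients of points that, by construction, lie in a common leaf of $\fol|_{B_{r_2}(P)}$, so $V^\lambda$ is tangent to that leaf at every such point, and smoothness off the horizontal rays is then automatic.

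The main obstacle is exactly this transfer of verticality between slices of different radii. The transnormal property alone only controls horizontal geodesics, whereas verticality is a transverse condition on vector fields; conversely, the existence of smooth vertical extensions at each point does not, by itself, relate fields at $\exp(v)$ and $\exp(\lambda v)$. Linking the two genuinely requires the Gauss lemma to identify tangent spaces along a ray via parallel translation and to ensure that the rescaling $v\mapsto\lambda v$ intertwines the geodesic sphere splittings. Getting this compatibility precise, and extending from values on the horizontal rays to a globally defined vertical field, is where the technical content of the proof lies.
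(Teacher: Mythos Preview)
The paper does not prove this lemma; it simply cites Molino's book (Chapter~6, Lemma~6.1 and Proposition~6.7). So there is no ``paper's own proof'' to compare against, only the standard argument in the literature. With that caveat, your proposal has a genuine gap at the decisive step.

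Your reduction is correct in spirit: if $(h_\lambda)_*V$ is vertical whenever $V$ is, then $h_\lambda$ takes plaques to plaques and you are done (the extra factor $\lambda$ is irrelevant here; the pushforward $(h_\lambda)_*V$ already has flow conjugate to that of $V$, so your stated reason for the rescaling is not quite right, though harmless). The problem is your justification that $V^\lambda$ is vertical. You write that along a horizontal ray the values of $V^\lambda$ are ``limits of difference quotients of points that, by construction, lie in a common leaf of $\fol|_{B_{r_2}(P)}$''. But those points are precisely $h_\lambda$-images of points lying in a common leaf of $\fol|_{B_{r_1}(P)}$; asserting that their images lie in a common leaf at radius $r_2$ \emph{is} the statement of the lemma. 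Nothing in your use of Gauss or transnormality bridges this: transnormality tells you the radial direction is orthogonal to the leaves, and Gauss tells you $h_\lambda$ respects the radial/spherical splitting, but neither says anything about which spherical directions are tangent to leaves at the new radius. So the argument is circular.

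The standard proof avoids this circularity by working in the local quotient. On the regular stratum the foliation is locally a Riemannian submersion $\pi:U_{\mathrm{reg}}\to Q$; horizontal geodesics project to geodesics in $Q$, and two minimal horizontal geodesics from points $q_1,q_2$ in the same leaf to $P$ project to the \emph{same} geodesic in $Q$ (same endpoints $\pi(L')$ and $\pi(L)$, same length $r_1$, hence equal for $r_1$ small). Therefore $\gamma_1(t)$ and $\gamma_2(t)$ lie in the same leaf for every $t$, which is exactly the homothety statement on the regular part. One then extends across the singular strata using the stratification and density of regular leaves. If you want to salvage your vector-field approach, the honest route is to prove that the radial field $\nabla d(\cdot,P)$ is \emph{foliated} (i.e.\ $[\nabla d,V]$ is vertical for all vertical $V$); but that again requires the quotient-geodesic argument or an equivalent equidistance computation, not merely Gauss and transnormality.
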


This result still holds, more generally, if we replace the plaque $P$ by an open subset $\base$ of some submanifold $N\subset M$ which is a union of leaves of the same dimension. In this case we consider some $\epsilon>0$ such that $\exp:\nu^\epsilon \base\to M$ is a diffeomorphism onto the image $B_{\epsilon}(\base)$, and define the homothetic transformation \emph{around $W$}, $h_{\lambda}:B_{r}(\base)\to B_{\lambda r}(\base)$, as before. In this case, an analogous version of the Homothetic Transformation Lemma applies.


\subsection{Infinitesimal foliation}\label{SS:inf-foliation}
Let $(M,\fol)$ be a singular Riemannian foliation, $p\in M$ a point, and $S_p$ a slice at $p$. 

\begin{definition}[Infinitesimal foliation at $p$]
The \emph{infinitesimal foliation of $\fol$ at $p$}, denoted by $(\nu_pL_p,\fol_p)$ is defined as the partition of $\nu_pL_p$ whose leaf at $v\in \nu_pL_p$ is given by
\[
L_v=\{w\in \nu_pL_p\mid \exp_ptw\in L_{\exp_ptv}\quad \forall t>0\textrm{ small enough}\},
\]
where $L_{\exp_ptv}$ denotes the leaf of $(S_p,\fol|_{S_p})$ through $\exp_ptv$.
\end{definition}
The leaf $L_v$ is well defined because, by the Homothetic Transformation Lemma, if $\exp_p{t_0w}$ belongs to the same leaf of $\exp_pt_0v$ for some small $t_0$, then $\exp_ptw$ belongs to the same leaf of $\exp_ptv$ for every $t\in (0,t_0)$. In the following proposition we collect the important facts about infinitesimal foliations that we will need.

\begin{theorem}
Given a singular Riemannian foliation $(M,\fol)$ and a point $p\in M$ with infinitesimal foliation $(\nu_pL_p,\fol_p)$, then:
\begin{enumerate}
\item The foliation $(\nu_pL_p,\fol_p)$  is a singular Riemannian foliation with respect to the flat metric $\metric_p$ at $p$.
\item The normal exponential map $\exp_p:\nu_p^{\epsilon}L_p\to M$ sends the leaves of $\fol_p$ to the leaves of $(S_p,\fol|_{S_p})$.
\item $(\nu_pL_p,\fol_p)$ is invariant under rescalings $r_{\lambda}:\nu_pL_p\to \nu_pL_p$, $r_{\lambda}(v)=\lambda v$.
\end{enumerate}
\end{theorem}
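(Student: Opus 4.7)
I would establish the three items in the order (3), (2), (1), since each subsequent step leans on the previous one.

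For (3), rescaling invariance is essentially a direct avatar of the Homothetic Transformation Lemma. I apply the lemma with $P$ a small plaque through $p$ in $L_p$: the homothetic transformations $h_{\lambda}$ on $B_{r}(P)$ preserve the partition $\fol|_{B_{r}(P)}$. Under the canonical identification $\exp_p : \nu_p^\epsilon L_p \to S_p$, these $h_{\lambda}$ correspond exactly to the rescalings $r_{\lambda}(v)=\lambda v$ on $\nu_pL_p$. Hence $r_{\lambda}$ takes leaves to leaves on $\nu_p^\epsilon L_p$; since $L_v$ for arbitrary $v\in\nu_pL_p$ is defined by rescaling $v$ down to the regime where $\exp_p$ is a diffeomorphism, rescaling invariance then extends to all of $\nu_pL_p$.

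For (2), this is essentially tautological from the definition: by construction, for $v\in\nu_p^\epsilon L_p$ the leaf $L_v$ is precisely the connected component of $\exp_p^{-1}(L_{\exp_pv}\cap S_p)$ through $v$, and $\exp_p$ is a diffeomorphism between $\nu_p^\epsilon L_p$ and $S_p$.

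Part (1) is the substantive step. I would first invoke a standard Molino-theoretic fact that the slice foliation $(S_p,\fol|_{S_p})$ is itself a singular Riemannian foliation with respect to the induced metric $\metric|_{S_p}$. Pulling back via $\exp_p$ immediately shows that $\fol_p$ is a singular Riemannian foliation on $\nu_p^\epsilon L_p$ with respect to the \emph{pullback} metric $\exp_p^{*}(\metric|_{S_p})$. The remaining issue is that we want the flat metric $\metric_p$ instead. To bridge this, I would use: (a) $\exp_p^{*}(\metric|_{S_p})(v) = \metric_p + O(|v|^2)$ at the origin, and (b) the rescaling invariance from (3). More precisely, given a flat-geodesic $\gamma(t)=v+tu$ starting orthogonally to $T_v L_v$, apply $r_{\lambda}$ with $\lambda\to 0$: under rescaling, leaves, tangent spaces to leaves, and the flat metric all transform compatibly, while the pullback metric converges to $\metric_p$. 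Taking the limit transfers transnormality from the pullback metric to $\metric_p$. Similarly, smoothness of vertical vector fields spanning leafwise tangent spaces away from the origin is inherited from $(S_p,\fol|_{S_p})$, and rescaling invariance allows one to control and extend such vector fields near the singular point $0$.

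The main obstacle is the last step: transferring the singular Riemannian foliation property from the pullback metric to the flat metric. A naive approach fails because a flat-metric geodesic is not a pullback-metric geodesic, so transnormality does not transport trivially. The argument must exploit the rescaling invariance of (3) in an essential way, using that the discrepancy $\exp_p^{*}(\metric|_{S_p})-\metric_p$ vanishes to second order at the origin, so that rescaling by $\lambda\to 0$ extracts the correct flat-metric behavior.
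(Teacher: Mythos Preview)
Your treatment of (2) and (3) matches the paper's: (2) is immediate from the definitions, and (3) is the Homothetic Transformation Lemma transported through $\exp_p$. The paper proves (1) simply by citing Molino's book (Prop.~6.5), so your outline goes further than the paper itself does.

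The overall strategy you propose for (1)---use rescaling invariance to pass from a curved metric to the flat one---is the right idea and is essentially how Molino's argument runs. However, your chosen starting point is problematic. You invoke as a ``standard Molino-theoretic fact'' that $(S_p,\fol|_{S_p})$ is a singular Riemannian foliation for the \emph{induced} metric $\metric|_{S_p}$, but the slice $S_p$ is not totally geodesic in $M$, so geodesics for the induced metric are not ambient geodesics, and transnormality does not carry over from $(M,\metric,\fol)$ by restriction. In fact this claim is roughly as hard as what you are trying to prove, so using it as input is circular in spirit.

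A cleaner route, closer to what Molino actually does, avoids the induced metric entirely. One observes directly that the leaves of $\fol_p$ lie in the distance spheres of $(\nu_pL_p,\metric_p)$ (since leaves of $\fol$ lie in distance tubes around $L_p$), so radial straight lines are automatically perpendicular to all leaves. For non-radial directions, one shows that the restriction of $\fol_p$ to each sphere is a (regular) Riemannian foliation---this does follow from the ambient structure, since distance tubes around $L_p$ inherit a regular Riemannian foliation from $\fol$. Combining the radial and spherical pieces gives transnormality for the flat metric directly. Your rescaling-limit argument then serves to establish the singular-foliation half (smooth spanning vector fields), which is where it is actually needed.
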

\begin{proof}
1) \cite{Molino}, Prop. 6.5.\\
2) Follows from the definition of infinitesimal foliation, and of slice foliation.\\
3) Via the exponential map $\exp:\nu^{\epsilon}L_p\to S_p$, this corresponds to the Homothetic Transformation Lemma on $S_p$.
\end{proof}

The following fact will come very useful.

\begin{proposition}\label{P:differential-foliated}
Given singular Riemannian foliations $(M,\fol)$, $(M',\fol')$ and a foliated diffeomorphism $\phi: U\to U'$, between open sets $U,U'$ of $M,M'$ respectively, sending a point $p\in U$ to $p'\in U'$, the differential of $\phi$ induces a linear, foliated isomorphism $\phi_*:(\nu_pL_p,\fol_p)\to (\nu_{p'}L_{p'},\fol'_{p'})$.
\end{proposition}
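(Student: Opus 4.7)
The plan is to establish the linear isomorphism $\phi_*$ first, then verify it is foliated.

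First, I would observe that, since $\phi$ is a foliated diffeomorphism with $\phi(p) = p'$, it locally carries $L_p$ onto $L_{p'}$; hence $d\phi_p$ restricts to an isomorphism $T_pL_p \to T_{p'}L_{p'}$ and descends to a linear isomorphism of the quotients $T_pM/T_pL_p \to T_{p'}M'/T_{p'}L_{p'}$. Composing with the canonical metric identifications $\nu_pL_p \cong T_pM/T_pL_p$ and $\nu_{p'}L_{p'} \cong T_{p'}M'/T_{p'}L_{p'}$ yields the asserted linear isomorphism $\phi_*$. This handles linearity and bijectivity; the substantive task is to verify that $\phi_*$ is foliated.

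The strategy I would adopt is to upgrade $\phi$ to a smooth foliated map $\Psi : S_p \to S_{p'}$ (after shrinking $S_p$) with $\Psi(p) = p'$ and $d\Psi_p|_{\nu_pL_p} = \phi_*$. To construct $\Psi$, fix a plaque $P' \subset L_{p'}$ around $p'$ and its tubular neighborhood $T = \exp(\nu^\delta P')$; each $y \in T$ has a unique representation $y = \exp_q(x)$ with $q \in P'$ and $x \in \nu_qL_{p'}$. Define a retraction $\Pi : T \to S_{p'}$ by $\Pi(\exp_q(x)) = \exp_{p'}(\tau_{q \to p'}x)$, where $\tau_{q \to p'}$ denotes parallel transport in $\nu L_{p'}$ along the $L_{p'}$-geodesic from $q$ to $p'$, and then set $\Psi = \Pi \circ \phi|_{S_p}$. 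Molino's local structure of $\fol'$ near the leaf $L_{p'}$---the leaves of $\fol'$ in $T$ are ``singular holonomy tubes'' over plaques of $L_{p'}$, and parallel transport in $\nu L_{p'}$ intertwines the infinitesimal foliations at different feet---will ensure $\Pi$ is foliated, and a direct computation using $d(\exp_p)_0 = \mathrm{id}$ will give $d\Psi_p|_{\nu_pL_p} = \phi_*$.

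With $\Psi$ in hand, pulling back through $\exp_p$ and $\exp_{p'}$ produces a smooth foliated map $\widetilde\Psi : (\nu_p^\epsilon L_p, \fol_p) \to (\nu_{p'}L_{p'}, \fol'_{p'})$ fixing the origin with $d\widetilde\Psi_0 = \phi_*$. The final step exploits the rescaling invariance of $\fol_p$ and $\fol'_{p'}$ guaranteed by the theorem above: the family $\widetilde\Psi_t := r_{1/t} \circ \widetilde\Psi \circ r_t$ is foliated for each small $t > 0$ and satisfies $\widetilde\Psi_t(v) = \phi_*(v) + O(t)$ uniformly on compacta. For $v \in \nu_pL_p$ and $w \in L_v$ in $\fol_p$, rescaling invariance gives $tw \in L_{tv}$, so $\widetilde\Psi_t(v)$ and $\widetilde\Psi_t(w)$ lie in a common leaf of $\fol'_{p'}$ for every small $t$; the fact that leaves of an infinitesimal foliation lie on Euclidean spheres centered at the origin (by equidistance of $\fol'_{p'}$ from the leaf $\{0\}$) will let me pass to the limit and conclude that $\phi_*(v)$ and $\phi_*(w)$ share a leaf of $\fol'_{p'}$.

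The hard parts will be (i) verifying that the parallel-transport retraction $\Pi$ is foliated, which rests on Molino's holonomy-tube description of leaves near $L_{p'}$, and (ii) closing the limit argument in the final step, since leaf equivalence in a singular Riemannian foliation is not closed in general and the argument must exploit the ``cone over a sphere'' structure of infinitesimal foliations.
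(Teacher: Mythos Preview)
Your overall strategy---construct a smooth foliated map between slices and then take a rescaling limit to linearize---matches the paper's approach, and your step (ii) is exactly what the paper does (it simply asserts that ``as a limit of foliated maps, it is itself foliated'', relying implicitly on the closedness of the leaf relation for infinitesimal foliations restricted to spheres).

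The gap is in step (i): the claim that the parallel-transport retraction $\Pi$ is foliated. Parallel transport in $\nu L_{p'}$ with respect to the Levi-Civita (or induced normal) connection has no a priori relation to the foliation structure of the slices. Molino's local description says the leaves near $L_{p'}$ are swept out by \emph{foliation holonomy}---i.e.\ by flows of vertical vector fields tangent to $\fol'$---not by normal parallel transport, and these two operations generally differ. Even for a regular leaf in a Riemannian foliation, the normal connection's holonomy along a loop in $L_{p'}$ need not coincide with the transverse holonomy of the foliation; for a singular leaf there is even less reason to expect agreement. So the appeal to ``Molino's holonomy-tube description'' does not deliver what you need.

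The paper sidesteps this issue entirely. After reducing (via pullback of the metric) to the case $M=M'$, $\phi=\mathrm{id}$, $p=p'$, with $\fol$ a singular Riemannian foliation for two metrics $\metric$ and $\tilde\metric$, it constructs the slice-to-slice map $H:S_p\to\wt S_p$ directly from flows of vertical vector fields $X_1,\ldots,X_k\in\mathfrak X(M,\fol)$ whose values at $p$ span $T_pL_p$: for each $q\in S_p$ one flows along these fields until one hits $\wt S_p$. Such flows are foliated by definition, so $H$ is automatically foliated, and the composite $\psi=\wt\exp_p^{-1}\circ H\circ\exp_p$ is a foliated diffeomorphism between the infinitesimal foliations. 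The rescaling limit then proceeds exactly as you describe. If you replace your parallel-transport retraction $\Pi$ by this vector-field-flow construction, the rest of your argument goes through.
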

\begin{proof}
By substituting $(M',\metric',\fol')$ with $(M,\phi^*g',\fol)$, the problem can be reduced to the case where $M=M'$, $\phi=id$, $p=p'$, and $\fol=\fol'$ is a singular Riemannian foliation with respect to two metrics, $\metric$ and $\tilde{\metric}$. In the following, we will denote with a ``tilde'' ($\;\tilde{ }\;$) every geometric object related to the metric $\tilde{\metric}$, and without the tilde any geometric object related to $\metric$.

Let $S_p$ (resp. $\wt{S}_p$) denote a slice at $p$ with respect to $\metric$ (resp. $\tilde{\metric}$). Consider the set $\{X_1,\ldots X_k\}\subset \mathfrak{X}(M,\fol)$, $k=\dim L_p$,  of vector fields such that $\{X_1(p),\ldots, X_k(p)\}$ is a basis of $T_pL_p$. Denote by $\Phi_i^t$ the flow of $X_i$, and define $\Phi_{(t_1,\ldots t_k)}=\Phi_k^{t_k}\circ\ldots \circ \Phi_1^{t_1}$.

Around $p$, both $S_p$ and $\wt{S}_p$ are transverse to $\textrm{span}(X_1,\ldots X_k)$ and, up to possibly replacing $S_p$ and $\wt{S}_p$ with smaller open subsets, we can assume that for every $q\in S_p$ there exists a unique $\tilde{q}\in \wt{S}_p$ of the form $\tilde{q}=\Phi_{(t_1,\ldots t_k)}(q)$. This gives rise to a map $H:S_p\to \wt{S}_p$, $H(q)=\tilde{q}$ which is differentiable and, since $q$ and $\tilde{q}$ belong to the same leaf of $\fol$, sends the leaves of $\fol|_{S_p}$ to the leaves of $\fol|_{\wt{S}_p}$. In other words, there is a foliated diffeomorphism $H: (S_p,\fol|_{S_p})\to (\wt{S}_p,\fol|_{\wt{S}_p})$.

Consider the composition $\psi$ of foliated diffeomorphisms
\[
(\nu_p^\epsilon L_p,\fol_p)\stackrel{\exp_p}{\lra}(S_p,\fol|_{S_p})\stackrel{H}{\lra}(\wt{S}_p,\fol|_{\wt{S}_p})\stackrel{\tilde{\exp}_p^{-1}}{\lra} (\tilde{\nu}_p^\epsilon L_p,\wt{\fol}_p).
\]
For any $\lambda\in (0,1)$, one can define a new foliated diffeomorphism
\[
\psi_\lambda:(\nu_p^{\epsilon/\lambda}L_p,\fol_p)\to (\tilde{\nu}_p^{\epsilon/\lambda}L_p,\wt{\fol}_p), \qquad \psi_\lambda(v)={1\over \lambda}\psi(\lambda v).
\]
As $\lambda\to 0$, the maps $\psi_\lambda$ converge to the differential $d_0\psi$ of $\psi$ at $0$. This is an invertible linear map (in particular a diffeomorphism) and, as a limit of foliated maps, it is itself foliated. Therefore, the map
\[
\phi_*:=d_0\psi:(\nu_pL,\fol_p)\lra (\tilde{\nu}_pL,\widetilde{\fol}_p)
\]
satisfies the statement of the proposition.
\end{proof}

\begin{remark}\label{R:essential-part}
Given a singular Riemannian foliation $(M,\fol)$ and a submanifold $N\subset M$ which is a union of leaves of the same dimension, the infinitesimal foliation at a point $p\in M$ splits as a product $(\nu_p(L_p,N)\times \nu_pN,\{\textrm{pts.}\}\times\fol_p|_{\nu_pN})$, where $\nu_p(L_p,N)=\nu_pL_p\cap T_pN$. In this case, the foliation $(\nu_pN,\fol_p|_{\nu_pN})$ is the ``essential part'' of the infinitesimal foliation $(\nu_pL_p,\fol_p)$. By abuse of notation, we will call the foliation $(\nu_pN,\fol_p|_{\nu_pN})$ \emph{infinitesimal foliation at $p$} as well, and denote it by $\fol_p$.
\end{remark}

Given a singular Riemannian foliation $(M,\fol)$ and a point $p\in M$, the infinitesimal foliation $(\nu_pL_p,\fol_p)$ at $p$ contains the origin as a leaf of $\fol_p$. Based on this fact, we make the following definition.
\begin{definition}[Infinitesimal foliation]
An \emph{infinitesimal foliation} is a singular Riemannian foliation $(V,\fol)$ on an Euclidean vector space, with the origin $\{0\}$ being a 0-dimensional leaf.
\end{definition}

\subsection{Homogeneous and orbit like foliations}\label{SS:orbit-like}

A singular Riemannian foliation $(M,\fol)$ is called \emph{homogeneous} (sometimes \emph{Riemannian homogeneous}) if there exists a connected Lie group $G$ acting by isometries on $M$, whose orbits are precisely the leaves of $\fol$. Furthermore, a singular Riemannian foliation $(M,\fol)$ is called \emph{orbit-like} if at every point $p\in M$, the infinitesimal foliation $(\nu_pL_p,\fol_p)$ is closed and homogeneous.

\begin{example}[Holonomy foliations]\label{E:foliation-vector-bundle}
An example of orbit like foliation, which will be useful to keep in mind later on, can be constructed as follows. Consider a Riemannian manifold $L$, and an Euclidean vector bundle $E$ over $L$, that is, a vector bundle over $L$ with an inner product $\langle\,,\,\rangle_p$ on each fiber $E_p$, $p\in L$. Let $\nabla^E$ be a metric connection on $E$, i.e. a connection on $E$ such that, for every vector field $X$ on $L$ and sections $\xi, \eta$ of $E$, one has
\[
X\langle \xi, \eta \rangle=\langle\nabla^E_X\xi, \eta\rangle+\langle \xi, \nabla^E_X\eta\rangle.
\]
Given $(E,\nabla^E)$, there is an induced Riemannian metric  $\metric^E$ on $E$, called \emph{connection metric}. Moreover, $\nabla^E$ induces a \emph{parallel transport} on $E$: given $\xi\in E_p$ and a curve $\gamma:[0,1]\to L$ with $\gamma(0)=p$, there exists a unique lift $\xi(t)$, $t\in [0,1]$ with $\xi(0)=\xi$ such that $\nabla^{E}_{\gamma'(t)}\xi(t)=0$ for every $t\in [0,1]$. On $E$ one can now define a foliation $\fol^E$, by declaring two vectors $\xi, \eta\in E$ in the same leaf if they can be connected to one another via a composition of parallel transports. The leaves of $\fol^E$ are usually referred to as the \emph{holonomy tubes} around the zero section $L\subset E$, and they define a singular Riemannian foliation on $(E,\metric^E)$. Moreover, the infinitesimal foliation at any point of $E$ is homogeneous: in fact, for any point $p$ along the zero section $L$, one can first construct the holonomy group $H_p$ of the connection $\nabla^E$, which acts by isometries on the fiber $E_p$ and whose orbits are precisely the leaves of the infinitesimal foliation of $\fol^E$ at $p$. Similarly, the infinitesimal foliation at a point $\xi\in E_p$ is given by the orbits in $\nu_\xi L_\xi$ of the stabilizer $H_\xi\subset H_p$ of $\xi$. The foliation $\fol^{E}$ coincides with its own linearization with respect to the zero section (see definition in Section \ref{SS:lin-foliation}). 
Moreover, if the leaves of $\fol^E$ are closed then $(E,\metric^E, \fol^E)$ is an orbit-like foliation.
\end{example}

\begin{remark}\label{R:eamples}
When $L\subset M$ is a submanifold of somewhat special geometry, the holonomy foliation on the normal bundle $E$ of $L$, endowed with the Levi-Civita connection, induces via the normal exponential map a foliation on the whole of $M$. For example, if $L$ has \emph{parallel focal structure}, then the induced foliation on $M$ is a \emph{polar foliation} \cite{toeben}. If $M$ is a complete, non-compact manifold with sectional curvature $\geq 0$ and $L$ is a \emph{soul} of $M$ \cite{Cheeger-Gromoll}, then the induced foliation on $M$ is Wilking's \emph{dual foliation} to the Sharafutdinov projection \cite{Wilking}.
\end{remark}

Although in principle the property of being orbit-like might depend on the metric, the following proposition shows in fact that being orbit like is invariant under foliated diffeomorphisms.

\begin{proposition}\label{P:orbit-like-invariant}
The following hold:
\begin{enumerate}
\item Given a foliated linear isomorphism  $\varphi:(V,\fol)\to(V',\fol')$ between infinitesimal foliations, $(V,\fol)$ is homogeneous if and only if $(V',\fol')$ is homogeneous.
\item Given a foliated diffeomorphism $\phi:(M,\fol)\to (M',\fol')$ between singular Riemannian foliations, $(M,\fol)$ is orbit-like if and only if $(M',\fol')$ is orbit-like.
\end{enumerate}

\end{proposition}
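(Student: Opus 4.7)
For any $p\in M$ with image $p'=\phi(p)\in M'$, Proposition \ref{P:differential-foliated} gives a foliated linear isomorphism $\phi_*:(\nu_pL_p,\fol_p)\to(\nu_{p'}L_{p'},\fol'_{p'})$. Being a linear isomorphism, it is a homeomorphism and so sends closed leaves to closed leaves. Homogeneity transfers by part (1). Applying the same argument to $\phi^{-1}$, orbit-likeness, which is closedness plus homogeneity of every infinitesimal foliation, is preserved under $\phi$.

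\textbf{Part (1), setup.} Assume $(V,\fol)$ is homogeneous via a connected Lie group $G$ acting by $g$-isometries with orbits equal to the leaves of $\fol$. Since $\{0\}$ is a $0$-dimensional leaf and $G$ fixes it, $G\subset O(V,g)$; in particular $G$ is compact. Define $G':=\varphi G\varphi^{-1}$. Since $\varphi$ is a linear isomorphism, $G'\subset GL(V')$ is a compact connected Lie group (isomorphic to $G$) acting linearly on $V'$. For any $v'\in V'$, its $G'$-orbit is $\varphi(G\cdot\varphi^{-1}(v'))$, namely the image under $\varphi$ of a leaf of $\fol$, which is a leaf of $\fol'$. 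Thus the orbits of $G'$ are exactly the leaves of $\fol'$.

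\textbf{Part (1), key step.} It remains to see $G'\subset O(V',g')$. Since $\{0\}$ is a leaf of $\fol'$ and $(V',g',\fol')$ is an SRF, the equidistance property forces the $g'$-distance to $0$ to be constant on each leaf of $\fol'$. Therefore every leaf lies on a single $g'$-sphere $S_r(V',g')$ centered at the origin, and each such sphere is a union of leaves of $\fol'$. By the rescaling invariance of infinitesimal foliations, every radius $r>0$ is realized. Since $G'$ preserves each leaf of $\fol'$, it preserves each $g'$-sphere $S_r(V',g')$; a linear map preserving every sphere centered at $0$ preserves the $g'$-norm, so $G'\subset O(V',g')$. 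Hence $G'$ is a connected Lie group of $g'$-isometries whose orbits are exactly the leaves of $\fol'$, proving $(V',\fol')$ is homogeneous. Applying the same argument to $\varphi^{-1}$ gives the converse.

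\textbf{Main obstacle.} The only delicate point is the key step: the recognition that in an infinitesimal SRF each leaf automatically lies on a concentric sphere around the origin. Once this is noticed, the pushed-forward group $G'$ is forced to consist of $g'$-isometries without any averaging or rigidity argument, and the rest of the proof is routine transport via the linear isomorphism $\varphi$.
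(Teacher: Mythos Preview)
Your proof is correct and follows essentially the same line as the paper's, with one cosmetic difference: the paper argues at the Lie-algebra level (pushing forward the Killing generators $X_i(v)=A_iv$ via $\varphi$ and checking that the resulting linear fields $Y_i(v')=B_iv'$ satisfy $\metric'(B_iv',v')=0$, hence are Killing), whereas you argue at the group level (conjugating $G$ by $\varphi$ and checking that the resulting group lands in $O(V',g')$). Both arguments rest on the identical key observation---the one you single out as the ``main obstacle''---that in an infinitesimal singular Riemannian foliation every leaf lies in a distance sphere about the origin. From this, skew-symmetry of $B_i$ (paper) and norm-preservation of $G'$ (you) are the same conclusion read infinitesimally versus globally. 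Part (2) is handled identically in both.

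One harmless inaccuracy: your claim that $G$ is compact is not justified---its image in $O(V,g)$ need not be closed (e.g.\ an irrational winding in a maximal torus)---but you never actually use compactness, so the argument is unaffected. The sentence about rescaling invariance realizing every radius is likewise unnecessary: you only need that each individual $G'$-orbit lies on \emph{some} sphere, which already follows from equidistance.
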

\begin{proof}
1) By the symmetric roles of $V$ and $V'$, it is enough to show that if $(V,\fol)$ is homogeneous, so is $(V',\fol')$. Suppose that $(V,\fol)$ is homogeneous, and therefore the foliation $\fol$ is spanned by Killing fields. Recall that a vector field $X$ on an Euclidean space $(V,\metric)$ is Killing if and only if is of the form $X(v)=Av$, where $A$ is a skew symmetric endomorphism of $V$, in the sense that $\metric(Av,v)=0$ for every $v\in V$. Letting $\{X_1,\ldots X_k\}$ denote a set of Killing fields on $V$ spanning the foliation $\fol$, the set $\{Y_1,\ldots Y_k\}$ with $Y_i(v')=\varphi_*\big(X_i\big(\varphi^{-1}(v')\big)\big)$ spans the foliation $\fol'$ as well. Since $\varphi$ is a linear map and the vector fields $X_i$ are linear, it follows that $Y_i$ can be written as $Y_i(v')=B_iv'$ for some endomorphism $B_i:V'\to V'$, $i=1\ldots k$. Since $(V',\metric', \fol')$ is a singular Riemannian foliation, the leaf $L_{v'}$ through $v'$ lies in a distance sphere from the origin, and in particular $\metric'(T_{v'}L_{v'},v')=0$. Since $Y_i(v')$ is tangent to $L_{v'}$, it follows that
\[
0=\metric'(Y_i(v'),v')=\metric(B_iv',v')
\]
In other words, $B_i$ is skew-symmetric and thus $Y_i$ is a Killing field as well. Therefore the foliation $(V',\fol')$ is spanned by Killing vector fields, hence it is homogeneous as well.

2) Up to exchanging the roles of $M$ and $M'$, it is enough to show that if $(M,\fol)$ is orbit-like, so is $(M',\fol')$. Fixing a point $p\in M$, Proposition  \ref{P:differential-foliated} states that the foliated diffeomorphism $\phi$ induces a foliated linear isomorphism $\phi_*:(\nu_pL_p, \fol_p)\lra (\nu_{p'}L_{p'},\fol_{p'})$, where $p'=\phi(p)$. Since $(M,\fol)$ is orbit-like, it follows that $(\nu_pL_p, \fol_p)$ is closed and homogeneous. From the first point above it follows that $(\nu_{p'}L_{p'},\fol_{p'})$ is homogeneous as well, and by the continuity of $\phi_*$ one has that $(\nu_{p'}L_{p'},\fol_{p'})$ is closed. Since $p'$ was chosen arbitrarily, it follows that $(M',\fol')$ is orbit-like.
\end{proof}

\subsection{Linearization, and linearized foliation}\label{SS:lin-foliation}
Let $(M,\fol)$ be a singular Riemannian foliation, $B\subset M$ a submanifold saturated by leaves, and $U\in M$ an $\epsilon$-tubular neighbourhood of $B$ with metric projection $\pr:U\to B$. Given a vector field $V$ in $U$ tangent to the leaves of $\fol$, it is possible to produce a new vector field $V^\ell$, called the \emph{linearization of $V$ with respect to $B$}, as follows:
\[
V^\ell=\lim_{\lambda\to 0} (h_{\lambda}^{-1})_*(V|_{h_{\lambda}(U)})
\]
where $h_\lambda:U\to U$ denotes the homothetic transformation around $B$. From \cite{Mendes-Radeschi}, Prop. 5, the linearization $V^\ell$ is a smooth vector field invariant under the homothetic transformation $h_\lambda$, and it coincides with $V$ along $B$. 

On $U$, consider the module $\mathfrak{X}(U,\fol)^\ell$ given by the linearization, with respect to $B$, of the vector fields in $\mathfrak{X}(U,\fol)$:
\[
\mathfrak{X}(U,\fol)^\ell=\{V^\ell\mid V\in \mathfrak{X}(U,\fol)\}.
\]
Let $\mathsf{D}$ the pseudogroup of local diffeomorphisms of $U$, generated by the flows of linearized vector fields, and let $(U,\fol^\ell)$ the partition of $U$ into the orbits of diffeomorphisms in $\mathsf{D}$. By Sussmann \cite[Thm. 4.1]{Sussmann}, such orbits are (possibly non-complete) smooth submanifolds of $M$. Moreover, as noted  By Molino \cite[Lem. 6.3]{Molino}, this foliation is spanned, at each point, by the vector fields in $\mathfrak{X}^\ell(U,\fol)$.

We call $(U, \fol^\ell)$ the \emph{linearized foliation of $\fol$ with respect to $B$}. We will show, later, that the leaves of the linearized foliation are actually complete, and have a particularly nice local structure (cf. Section \ref{S:linearized}).

Given a point $p\in B$, define $U_p=\pr^{-1}(p)\subset U$ and let $\fol_p$ (resp. $(\fol^\ell)_p$) denote the partition of $U_p$ into the connected components of $L\cap U_p$, as $L$ ranges through the leaves of $\fol$ (resp. $\fol^\ell$). If $U_p$ is given the flat metric $\metric_p$ of $\nu_pB$ via the exponential map $\exp_p:\nu_p^{\epsilon}B\to U_p$, then $\fol_p$ corresponds to the infinitesimal foliation at $p$ (cf. Remark \ref{R:essential-part}) which justifies the notation of $\fol_p$ for this foliation. Furthermore,
as noted in \cite[Sec. 6.4]{Molino}, $(\fol^\ell)_p$ is given by the linearization of $(U_p,\metric_p,\fol_p)$ with respect to the origin. In other words, $(\fol^\ell)_p=(\fol_p)^\ell$ and it makes sense to denote this foliation simply by $\fol_p^\ell$. Moreover, letting $\OO(\fol_p)$ denote the Lie group of (linear) isometries of $(U_p,\metric_p)$ sending every leaf to itself, one has:
\begin{proposition}\label{P:linearized-infinitesimal}
The foliation $(U_p,\fol^\ell_p)$ is homogeneous, given by the orbits of the identity component $H_p$ of $\OO(\fol_p)$.
\end{proposition}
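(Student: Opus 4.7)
My plan is to identify the module $\mathfrak X(U_p,\fol_p)^\ell$ of linearized vertical fields with $\mathrm{Lie}(H_p)$; the orbits of the (pseudo)groups they generate will then automatically coincide. First I would show that every $V^\ell$ is a linear skew-symmetric vector field. Since $\{0\}$ is a leaf of $\fol_p$, any $V\in \mathfrak X(U_p,\fol_p)$ satisfies $V(0)=0$, and the linearization at the origin reduces to $V^\ell(v)=\lim_{\lambda\to 0}\lambda^{-1}V(\lambda v)=d_0V(v)$, a linear vector field $V^\ell(v)=A_V v$. Moreover, every leaf of $\fol_p$ lies on a distance sphere from $0$, so $\metric_p(V(v),v)\equiv 0$; expanding $V(v)=A_V v+O(|v|^2)$ forces $\metric_p(A_V v,v)=0$ for all $v$, making $A_V$ skew-symmetric and $V^\ell$ a Killing field on $(U_p,\metric_p)$.

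Next I would verify that $V^\ell$ is itself tangent to $\fol_p$. By the Homothetic Transformation Lemma (equivalently, the rescaling invariance of $\fol_p$), the map $h_\lambda(v)=\lambda v$ sends $L_v$ onto $L_{\lambda v}$, so the pushforward $(h_\lambda^{-1})_*V$ evaluated at $v$, namely $\lambda^{-1}V(\lambda v)$, lies in $T_vL_v$ for every $\lambda>0$. Since $T_vL_v$ is closed in $T_vU_p$, the limit $V^\ell(v)$ lies in $T_vL_v$ as well, so $V^\ell\in\mathfrak X(U_p,\fol_p)$.

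Now I would match this with $\mathrm{Lie}(H_p)$. Any Killing field on flat $U_p$ has the form $v\mapsto Av+b$; tangency to $\fol_p$ at the point-leaf $\{0\}$ forces $b=0$ and $A$ skew-symmetric. Conversely, the Stefan--Sussmann theory guarantees that the flow of a vertical field preserves each leaf, so any linear Killing field $v\mapsto Av$ tangent to $\fol_p$ has flow $\exp(tA)\in\OO(\fol_p)$. Hence $\mathrm{Lie}(H_p)$ equals the set of linear skew-symmetric vector fields tangent to $\fol_p$. The first two steps place every $V^\ell$ in this set, while conversely every $X\in\mathrm{Lie}(H_p)$ is already linear and therefore equals its own linearization $X^\ell\in\mathfrak X(U_p,\fol_p)^\ell$. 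Thus $\mathfrak X(U_p,\fol_p)^\ell=\mathrm{Lie}(H_p)$, and the orbits defining $\fol_p^\ell$ are precisely the $H_p$-orbits.

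The main delicacy I foresee is the second step, where tangency to $\fol_p$ must be propagated through the limit defining $V^\ell$; this rests on the Homothetic Transformation Lemma together with the closedness of $T_vL_v$ as a linear subspace. The remaining arguments are bookkeeping among the definitions of linearization, of Killing fields on Euclidean space, and of $\OO(\fol_p)$.
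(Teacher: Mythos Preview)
Your proposal is correct and follows essentially the same line as the paper's proof: show that each $V^\ell$ is a linear Killing field tangent to $\fol_p$, identify such fields with $\mathrm{Lie}(H_p)$, and conclude that the $\fol_p^\ell$-leaves and the $H_p$-orbits agree. If anything, you are more careful than the paper on one point: you explicitly argue (via the Homothetic Transformation Lemma and closedness of $T_vL_v$) that $V^\ell$ remains tangent to the leaves of $\fol_p$, which is exactly what is needed for its flow to land in $\OO(\fol_p)$ and hence in $H_p$; the paper passes over this verification rather quickly.
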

\begin{proof}
We identify here $U_p$ with a neighbourhood of the origin in $\nu_pB$ via the exponential map, and we think of $\fol_p^\ell$ as the linearization of $\fol_p$.

Given a vector field $V\in \mathfrak{X}(U_p,\fol_p)$, its linearization $V^\ell$ is linear, in the sense that $V^\ell_p=A\cdot p$ for some $A\in \textrm{End}(U_p)$. Since $\fol_p$ is a singular Riemannian foliation, the leaves are tangent to the distance spheres around the origin and therefore perpendicular to the radial directions from the origin: $\langle V^\ell_p,p\rangle=0$. In other words, $V^\ell_p=A\cdot p$ with $A$ skew symmetric, which implies that the flow of $V^\ell$ is an isometry of $U_p$. Moreover, since $V^\ell$ is everywhere tangent to the leaves of $\fol_p^\ell$, the flow of $V^\ell$ is a 1-parameter group in $H_p$, moving every leaf of $(\fol_p)^\ell$ to itself. In particular, the orbits of $H_p$ are contained in the leaves of $(\fol_p)^\ell$. 

However,  by definition of $H_p$, the tangent space of a $H_p$-orbit through a point $q\in U_p$ is given by
\[
T_q(H_p\cdot q)=\{W_q\mid W\textrm{ Killing vector field tangent to the leaves of }\fol_p\}
\]
and such vector fields coincide precisely with the vector fields in $\mathfrak{X}(U_p,\fol_p)^\ell$. Therefore, $H_p\cdot q$ is the integral manifold of $\mathfrak{X}(U_p,\fol_p)^\ell$ through $q$.
\end{proof}

\section{Molino's conjecture, assuming the Main Theorem}\label{S:Molino}

Before proving the Main Theorem, we show how Molino's Conjecture follows from it as a corollary.

\begin{proof}[Proof of Molino's Conjecture]
Let $(M,\fol)$ be a singular Riemannian foliation, and let $\ol{\fol}$ denote the closure of $\fol$. Molino himself proved that $\ol{\fol}$ is a partition into complete smooth closed submanifolds, and that $\ol{\fol}$ is a transnormal system. Therefore, in order to prove the conjecture, it is enough to show that for any leaf $L\in \fol$ with closure $\ol{L}$ and any vector $v\in \nu(L,\ol{L}):=\nu L\cap T\ol{L}$, there exists a smooth extension of $v$ to a vector field $V$ everywhere tangent to the leaves of $\ol{\fol}$.

Let $U$ be a tubular neighbourhood of $\ol{L}$, and let $(U, \widehat{\fol}^\ell)$ be the foliation satisfying the Main Theorem. Since $\widehat{\fol}^\ell$ coincides with $\fol$ along $\ol{L}$, it follows that $L$ is a leaf of $\widehat{\fol}^\ell$ as well. Since $\widehat{\fol}^\ell$ is an orbit-like foliation, by Theorem 1.6 of \cite{Alexandrino-Radeschi-smooth-flows}, given $v\in \nu(L,\ol{L})$ there is a vector field $V$ extending $v$ which is tangent to the closure of $\widehat{\fol}^\ell$. Since this closure is contained in $\ol{\fol}$, it follows that $V$ is also tangent to $\ol{\fol}$ and this ends the proof of the conjecture.
\end{proof}

\section{The setup}\label{S:setup}

Fix a leaf $L$, and distance tube $U=B_{\epsilon}(L)$ around $\ol{L}$. Using the normal exponential map $\exp:\nu\ol{L}\to M$, $U$ can be identified with the $\epsilon$-tube $\nu^\epsilon \ol{L}$ around the zero section. By the Homothetic Transformation Lemma, the pull-back foliation $\exp^{-1}\fol$ on $\nu^{\epsilon}\ol{L}$ is invariant under the rescalings $r_{\lambda}:\nu^{\epsilon}\ol{L}\to \nu^{\epsilon}\ol{L}$, $r_\lambda(p,v)=(p,\lambda v)$ for any $\lambda\in (0,1)$.

For this reason, in the following sections we will be considering the (slightly more general) setup:
\begin{itemize}
\item $U$ is the $\epsilon$-tube around the zero section of some Euclidean vector bundle $E\to \base$ (in our case $\base=\ol{L}$), with projection $\pr: U\to \base$.
\item $\metric$ is a Riemannian metric on $U$ with the same radial function as the Euclidean metric on each fiber of $E$.
\item $(U,\metric,\fol)$ is a singular Riemannian foliation on $U$, invariant under rescalings $r_\lambda$. In particular, the zero-section $\base$ is saturated by leaves and the projection $\pr$ sends leaves onto leaves.
\item The restriction $\fol_\base=\fol|_\base$ is a regular Riemannian foliation.
\item For every leaf $L\In \base$ and any point $p\in L$, the normal exponential map $\nu_p^{\epsilon}L\to U$ is an embedding.
\end{itemize}

\section{Three distributions}\label{S:distributions}

Let $(U,\metric,\fol)$, $\pr:U\to \base$ be as in Section \ref{S:setup}. In order to prove the Main Theorem, it is first needed to produce a nicer metric on $U$, and for this we first need to split the tangent space of $U$ into three components. The first, $\mathcal{K}=\ker\pr_*$, is the distribution tangent to the fibers of $\pr$. For the remaining two notice that, since the foliation $(\base,\fol_\base)$ is regular, the tangent bundle $T\base$ splits into a tangent and a normal part to the foliation: $T\base=T\fol|_{\base}\oplus\nu\fol|_{\base}$. The last two distributions will be constructed as (appropriately chosen) extensions $\mathcal{T}$ and $\mathcal{N}$ of $T\fol|_{\base}$ and $\nu\fol|_{\base}$ respectively, to the whole of $U$.
\begin{figure}[!htb]
\begin{centering}
\begin{pspicture}(3,5)

 \psline[linewidth=0.5pt](0,0)(2,0.6) 
 \psline[linewidth=0.5pt](0,0)(0,3.3) 
 \psline[linewidth=0.5pt](0,3.3)(2,4) 
 \psline[linewidth=0.5pt](2,0.6)(2,4) 
 \rput(2.4,3.8){$B$}  

 \psline[linewidth=0.5pt](1.5,0.45)(1.5,3.8) 
 \psline[linewidth=0.5pt](0.5,0.15)(0.5,3.5) 
 \psline[linewidth=1.5pt](1,0.3)(1,3.65) 
 \rput(1.2,3.3){$L$}



\psline[linewidth=1pt,linearc=.25]{->}(2.5,1.25)(2.5,2.25)
\rput(2.6,2.45){${\mathcal{T}}$}
\psline[linewidth=1pt,linearc=.25]{->}(2.5,1.25)(3.5,1.55)
\rput(3.25,1.75){${\mathcal{N}}$}
\psline[linewidth=1pt,linearc=.25]{->}(2.5,1.25)(3.5,0.95)
\rput(3.25,0.75){$\mathcal{K}$}

\psdots(1,1.7)
\rput(0.8,1.5){$p$}

\psdots(2.5,1.25)
\rput(2.3,1.05){$q$}

\psline[linewidth=0.5pt](1,1.7)(3,1.1) 

%
%
%
%
%
%
%
%
\end{pspicture}
\end{centering}
\caption{The distributions at $q$.}
\end{figure}
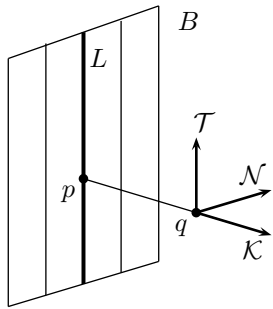

\subsection{The distribution $\mathcal{T}$}

From \cite{Alexandrino-desingularization} there exists a distribution $\widehat{\mathcal{T}}$ of rank $\dim \fol|_\base$, which extends $T\fol|_{\base}$ and is everywhere 
tangent to the leaves of $\fol$. 

The distribution $\mathcal{T}$ is simply defined as the \emph{linearization of $\widehat{\mathcal{T}}$ with respect to $\base$}, as follows: consider a family of vector fields $\{V_\alpha\}_{\alpha}$ spanning $\widehat{\mathcal{T}}$. Since $\widehat{\mathcal{T}}|_{\base}$ is tangent to $\base$, the vector fields $V_\alpha$ lie tangent to $\base$ as well and therefore it makes sense to consider their linearization $V_\alpha^\ell$ with respect to $\base$. By the properties of the linearization, these linearized vector fields still span a smooth distribution  of the same rank as $\widehat{\mathcal{T}}$, which we call $\mathcal{T}$.

\subsection{The distribution $\mathcal{N}$}
At each point $q\in U$ with $\pr(q)=p$, the slice $S_p=\exp_p(\nu_p^{\epsilon}L_p)$ contains $q$ as well as the whole $\pr$-fiber $U_p$ through $p$. In particular, $\mathcal{K}_q$ lies tangent to $S_p$. Moreover, $S_p$ comes equipped with a flat metric $\metric_p$, inherited from the metric on $\nu_pL$ via the diffeomorphism $\exp_p:\nu^\epsilon L_p\to S_p$.

Define $\widehat{\mathcal{N}}_q$ as the subspace of $T_qS_p$ which is $\metric_p$-orthogonal to $\mathcal{K}_q$.  Finally, define $\mathcal{N}$ as the \emph{linearization} of $\widehat{\mathcal{N}}$, as defined in the previous section.

The distributions $\widehat{\mathcal N}$ and $\mathcal N$ satisfy the following property:
\begin{proposition}\label{P:extension}
For every smooth $\fol_\base$-basic vector field $X_0$ along a plaque $P$ in $\base$ there exists a smooth extension $X$ to an open set of $U$ such that
\begin{enumerate}
\item $X$ is foliated and tangent to $\widehat{\mathcal N}$.
\item The linearization $X^{\ell}$ of $X$ with respect to $\base$ is tangent to $\mathcal{N}$, and it is foliated with respect to both $\fol$ and $\fol^\ell$.
\end{enumerate}
\end{proposition}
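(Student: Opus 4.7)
The plan is to construct $X$ in two stages. First, since $\fol|_{\base}$ is a regular Riemannian foliation, I extend $X_0$ from the plaque $P$ to a basic vector field $\tilde{X}_0$ on an open neighborhood of $P$ in $\base$; let $\phi^t$ denote its local flow, a foliated diffeomorphism of $(\base,\fol|_{\base})$ moving transversally to the leaves. Second, I lift $\phi^t$ to a local foliated diffeomorphism $\Phi^t$ of $(U,\fol)$ in such a way that $X:=(d/dt)|_{t=0}\Phi^t$ has all the required properties.

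To build the lift $\Phi^t$ I would use the slice structure: for each $p$ in the extended neighborhood and each $q\in S_p$, I define $\Phi^t(q)\in S_{\phi^t(p)}$ by transporting $q$ via a foliated linear isometry $\nu_pL_p\to \nu_{\phi^t(p)}L_{\phi^t(p)}$ between the infinitesimal foliations (in the spirit of Proposition~\ref{P:differential-foliated}) that additionally preserves the splitting $\nu_pL_p=\nu_p(L_p,\base)\oplus\nu_p\base$. By construction $\Phi^t$ preserves $\fol$ and maps $\mathcal{K}$ to $\mathcal{K}$, so $X$ is a foliated vector field extending $X_0$; moreover, because the infinitesimal variation of these slice-to-slice isometries preserves the $\metric_p$-orthogonal decomposition $T_qS_p=\mathcal{K}_q\oplus\widehat{\mathcal N}_q$, $X$ is tangent to $\widehat{\mathcal N}$.

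For property (2), the three properties of $X^\ell$ follow almost automatically. Tangency of $X^\ell$ to $\mathcal N$ is immediate from the definition $\mathcal N=\widehat{\mathcal N}^\ell$, since the linearization of a section of a distribution is a section of its linearization. The vector field $X^\ell$ is foliated with respect to $\fol$ thanks to the Homothetic Transformation Lemma: the homotheties $h_\lambda$ around $\base$ are foliated with respect to $\fol$ (by the $r_\lambda$-invariance of $\fol$ in our setup), and $X$ is foliated, so the limit $X^\ell=\lim_{\lambda\to 0}(h_\lambda^{-1})_*X$ is foliated as well. Finally, $X^\ell$ is foliated with respect to $\fol^\ell$ because $\fol^\ell$ is by construction the orbit foliation of the module $\mathfrak{X}(U,\fol)^\ell$, to which $X^\ell$ belongs, so its flow automatically preserves this orbit partition.

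The main obstacle is the construction of $\Phi^t$ in the second stage: one needs a canonical foliated identification of the infinitesimal foliations at $p$ and at $\phi^t(p)$ along a curve transverse to the leaves of $\fol|_{\base}$. This goes beyond the holonomy transport along a leaf of a regular Riemannian foliation (which would be canonical) and requires a careful use of the three distributions $\widehat{\mathcal T},\widehat{\mathcal N},\mathcal K$ just introduced in Section~\ref{S:distributions}, together with the transverse Riemannian structure of $(U,\fol)$ to guarantee that the resulting family of slice-to-slice identifications is smooth, foliated, and compatible with the splitting $\nu_pL_p=\nu_p(L_p,\base)\oplus \nu_p\base$.
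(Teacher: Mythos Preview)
There is a genuine error in your argument for part (2). You claim that $X^\ell$ belongs to $\mathfrak{X}(U,\fol)^\ell$ and therefore its flow preserves $\fol^\ell$. But $\mathfrak{X}(U,\fol)$ is the Lie algebra of \emph{vertical} vector fields (those tangent to the leaves), whereas $X$ is basic --- in particular normal to the leaves --- so $X\notin\mathfrak{X}(U,\fol)$ and $X^\ell\notin\mathfrak{X}(U,\fol)^\ell$. The correct route is a bracket computation: for any $V\in\mathfrak{X}(U,\fol)$, use that $r_\lambda$ is a diffeomorphism to get
\[
[X^\ell,V^\ell]=\lim_{\lambda\to 0}(r_\lambda^{-1})_*[X,V]\circ r_\lambda=[X,V]^\ell,
\]
which lies in $\mathfrak{X}(U,\fol)^\ell$ since $X$ is $\fol$-foliated and hence $[X,V]\in\mathfrak{X}(U,\fol)$. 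This shows $X^\ell$ is $\fol^\ell$-foliated.

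Your construction of $X$ in part (1) is also not the paper's, and the obstacle you flag is real. You try to lift the \emph{transversal} flow $\phi^t$ of $\tilde X_0$ to slice-to-slice isometries, and there is no canonical such lift. The paper avoids this entirely: it uses flows of linearized \emph{vertical} vector fields $Y_1,\dots,Y_k$ (extending a frame $\partial_{y_1},\dots,\partial_{y_k}$ of $TP$) to build a foliated diffeomorphism $F:P\times S_p\to U$, so that the neighborhood of $P$ in $U$ is trivialized as $P\times \nu_p(L,\base)\times\nu_p\base$. The basic vector field $X_0$ then corresponds to a constant vector $(0,x_0,0)$, and $X':=F_*(0,x_0,0)$ is foliated and tangent to $\widehat{\mathcal N}$ along $\pr^{-1}(P)$. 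This proves that $\widehat{\mathcal N}$ (and likewise $\mathcal K,\mathcal T$) is a foliated distribution, so one may project $X'$ onto its $\widehat{\mathcal N}$-component to obtain $X$ foliated and everywhere tangent to $\widehat{\mathcal N}$. The point is that only \emph{tangential} (holonomy-type) transport is needed, which is canonical; the transversal direction is handled statically inside the slice $S_p$.
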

\begin{proof}
1) Fix a leaf $L$ in $\base$, a plaque $P\subset L$ and a parametrization
\[
\varphi:(-1,1)^k\to P\subset L,
\]
where $k=\dim \fol_B$. We first show that there exists a small neighbourhood of $P$ in $U$, on which any $\fol_B$-basic vector field $X_0$ along $P$ can be extended to a foliated vector field $X'$, \emph{whose restriction to $\pr^{-1}(P)$ is tangent to $\widehat{\mathcal{N}}$}.

Let $\partial_{y_1},\ldots, \partial_{y_k}$ be coordinate vector fields on $P$, and let $Y_1, \ldots Y_k$ denote vector fields, linearized \emph{with respect to $L$}, that extend $\partial_{y_1},\ldots \partial_{y_k}$ to a neighbourhood of $P$ in $U$.  There is a foliated diffeomorphism
\begin{align*}
F: (P\times S_p, P\times \fol_{S_p})&\lra (U,\fol)\\ 
(\varphi(y_1,\ldots y_k),q)&\longmapsto \Phi_k^{y_k}\circ\ldots\circ \Phi_1^{y_1}(q)
\end{align*}
where $\Phi_i^{y_i}$ is the flow of $Y_i$, after time $y_i$.


Furthermore,
the foliation $(P\times S_p,P\times\fol_{S_p})$ locally splits as
\[
(P\times \nu_p(L,\base)\times \nu_p\base,P\times \{\textrm{pts.}\}\times \fol|_{\nu_p\base})
\]
where $\nu(L,\base)=\nu L \cap T\base$. Moreover, if $S_p$ is endowed with the Euclidean metric $\metric_p$ on $\nu_pL$, the splitting $S_p=\nu_p(L,\base)\times \nu_p\base$ is in fact Riemannian.

The map $F$ satisfies the following:
\begin{itemize}
\item The set $P\times \{0\}\times \nu_p^\epsilon\base$ is sent to $\pr^{-1}(P)=\nu^\epsilon\base|_P$.
\item The set $P\times \nu_p^{\epsilon}(L,\base)\times \{0\}$ is sent to a neighbourhood of $P$ in $B$.
\item Since $F$ is defined via linearized vector fields, each fiber $\{p'\}\times S_p\subset P\times S_p$ is sent, via $F$, to the slice $S_{p'}$, isometrically with respect to the flat metrics on $S_p$ and $S_{p'}$ (cf. \cite{Mendes-Radeschi}).
\end{itemize}

From the last point, it follows that the distribution of $P\times \nu_p(L,\base)\times \nu_p\base$ tangent to the second factor is sent, along $\nu^\epsilon\base|_{P}$, precisely to the distribution $\widehat{\mathcal{N}}$.

Any $\fol_B$-basic vector field $X_0$ along $P$ corresponds, via $F$, to a vector field along $P\times \{0\}\times \{0\}$ of the form $(0,x_0,0)$ where $x_0\in \nu_p(L,\base)$ is a fixed vector. One can clearly extend such a vector field to the foliated vector field $X'=F_*(0,x_0,0)$. Since $F$ is a foliated map, the vector field $X'$ is a foliated vector field, whose restriction to $B$ is tangent to $B$  by the second point above. Moreover, by the discussion above the restriction of $X'$ to $\pr^{-1}(P)$ is tangent to $\widehat{\mathcal{N}}$.

This proves the first claim, made at the beginning of the proof. In particular, since the plaque $P$ was chosen arbitrarily, this shows that the distribution $\widehat{\mathcal{N}}$ is \emph{foliated}: that is, given a vector $y$ tangent to $\widehat{\mathcal{N}}$ at a point $q$, there exists a foliated extension $Y$ along a plaque containing $q$ which is everywhere tangent to $\widehat{\mathcal{N}}$. It is easy to see that $\mathcal{K}$ and ${\mathcal{T}}$ are foliated as well. In particular, given the foliated vector field $X'$, the (unique) decomposition
\[
X'=X'_{_\mathcal{K}}+X'_{_\mathcal{T}}+X'_{_{\widehat{\mathcal{N}}}},\qquad X'_{_\mathcal{K}}\in \mathcal{K}, \quad X'_{_\mathcal{T}}\in \mathcal{T},\quad X'_{_{\widehat{\mathcal{N}}}}\in\widehat{\mathcal{N}}
\]
produces three vector fields $X'_{_\mathcal{K}}, \, X'_{_\mathcal{T}},\,X'_{_{\widehat{\mathcal{N}}}}$ which are foliated. In particular, the vector field $X=(X')_{\widehat{\mathcal{N}}}$ is foliated, everywhere tangent to $\widehat{\mathcal{N}}$, and it extends $X_0=(X_0)_{_{\widehat{\mathcal{N}}}}$ to an open set of $U$, as we needed to show.

2) Since $X$ is tangent to $\widehat{\mathcal{N}}$, its linearization $X^\ell$ is tangent to the linearization of $\widehat{\mathcal{N}}$, which is $\mathcal{N}$. Moreover, since $X$ is foliated and $r_\lambda:U\to U$ is a foliated map, $X^\ell=\lim_{\lambda\to 0}(r_\lambda^{-1})_*X\circ r_\lambda$ is foliated as well. Finally, since $X$ is foliated, for every vector field $V$ tangent to $\fol$ one has that $[X,V]$ is also tangent to $\fol$. Since $r_\lambda$ is a diffeomorphism, one computes
\begin{align*}
[X^\ell,V^\ell]	&=\lim_{\lambda\to 0}[(r_\lambda^{-1})_*X\circ r_\lambda,(r_\lambda^{-1})_*V\circ r_\lambda]\\
				&=\lim_{\lambda\to 0}(r_\lambda^{-1})_*[X,V]\circ r_\lambda\\
				&=[X,V]^\ell.
\end{align*}
Since the linearization $V^\ell$ are precisely the vector fields generating $\fol^\ell$, it follows from the equation above that $[X^\ell,V^\ell]$ is tangent to $\fol^\ell$ whenever $V^\ell$ is, and therefore $X^\ell$ is foliated with respect to $\fol^\ell$.
\end{proof}

\section{Structure of $\fol^\ell$, and the local closure $\widehat{\fol}^\ell$}\label{S:linearized}

Using the extensions $X^\ell$ defined in Proposition \ref{P:extension}, one can prove the following:

\begin{proposition}\label{P:local-form}
Around any point $p\in \base$ there is a neighbourhood $W$ of $p$ in $B$ such that $(\pr^{-1}(W),{\fol}^\ell|_{\pr^{-1}(W)})$ is foliated diffeomorphic to a product
\[
(\DD^k\times \DD^{m-k}\times U_p, \DD^k\times\{\textrm{pts.}\}\times {\fol}^\ell_p)
\]
where $k=\dim \fol|_\base$ and $m=\dim \base$.
\end{proposition}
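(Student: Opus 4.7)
The plan is to build an explicit foliated chart for $\pr^{-1}(W)$ as a composition of flows of two transverse families of linearized vector fields. First, I would choose coordinates $(y_1,\dots,y_k)\colon \DD^k\to P$ on a plaque $P\subset L_p^{\fol_\base}$ through $p$ with $\partial_{y_i}$ tangent to $\fol_\base$, and coordinates $(z_1,\dots,z_{m-k})\colon\DD^{m-k}\to T$ on a small $\fol_\base$-transversal $T$ through $p$; since $\fol_\base$ is regular, these can be chosen so that the $\partial_{z_j}$, extended via the local submersion $\base\to T$ near $p$, become $\fol_\base$-basic vector fields in a neighbourhood of $p$. Next, I extend each $\partial_{y_i}$ to a linearized (with respect to $\base$) vector field $Y_i$ tangent to $\mathcal{T}$ (hence to $\fol^\ell$), and I apply Proposition \ref{P:extension} to each $\partial_{z_j}$ to obtain $X_j^\ell$, tangent to $\mathcal{N}$ and foliated with respect to $\fol^\ell$. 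Letting $\Phi_i^t$ and $\Psi_j^t$ denote the respective flows, I set
\[
F\colon\DD^k\times\DD^{m-k}\times U_p\lra U,\qquad F(y,z,v)=\Phi_k^{y_k}\circ\cdots\circ\Phi_1^{y_1}\circ\Psi_{m-k}^{z_{m-k}}\circ\cdots\circ\Psi_1^{z_1}(v).
\]

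Next I would verify that, after shrinking $\DD^k\times\DD^{m-k}$, $F$ is a diffeomorphism onto $\pr^{-1}(W)$ for some neighbourhood $W$ of $p$ in $\base$. The key observation is that linearization w.r.t.\ $\base$ makes the $Y_i$ and $X_j^\ell$ commute with the rescalings $r_\lambda$, and hence so do their flows. Combined with $\pr\circ r_\lambda=\pr$, this gives $\pr\circ\Phi_i^t=\Phi_i^t|_\base\circ\pr$ and $\pr\circ\Psi_j^t=\Psi_j^t|_\base\circ\pr$. Therefore $\pr\circ F(y,z,v)=\alpha(y,z)$ is independent of $v$, where $\alpha\colon\DD^k\times\DD^{m-k}\to\base$ is the composition of flows on $\base$; since $\{\partial_{y_i},\partial_{z_j}\}$ span $T_p\base$, the map $\alpha$ is a local diffeomorphism at the origin, and I take $W=\alpha(\DD^k\times\DD^{m-k})$. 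For fixed $(y,z)$, the restriction $F(y,z,\cdot)\colon U_p\to U_{\alpha(y,z)}$ is the slice-isometry obtained by concatenating linearized-flow transports (cf.\ \cite{Mendes-Radeschi}), so $F$ is a diffeomorphism: injectivity follows since $\alpha$ and $F(y,z,\cdot)$ are both bijective, and surjectivity onto $\pr^{-1}(W)$ follows by construction.

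It then remains to see that $F$ intertwines $\DD^k\times\{\textrm{pts.}\}\times\fol_p^\ell$ with $\fol^\ell|_{\pr^{-1}(W)}$. Each $\Psi_j^t$ is $\fol^\ell$-foliated (Proposition \ref{P:extension}(2)), so $\Psi^z$ sends the $\fol_p^\ell$-leaf through $v\in U_p$ into a $\fol_{\alpha(0,z)}^\ell$-plaque inside $U_{\alpha(0,z)}$. Subsequently $\Phi^y$ preserves each $\fol^\ell$-leaf, since its generators $Y_i$ lie in $\mathcal{T}\subset T\fol^\ell$. Thus $F(\DD^k\times\{z\}\times L)$ is contained in a single $\fol^\ell$-plaque, and conversely, distinct product-leaves map to distinct plaques: different $z$'s project under $\pr$ to distinct $\fol_\base$-leaves, while within a single fiber $\Psi^z$ is a foliated diffeomorphism $(U_p,\fol_p^\ell)\to(U_{\alpha(0,z)},\fol_{\alpha(0,z)}^\ell)$ and hence separates $\fol_p^\ell$-leaves. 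This gives the required foliated diffeomorphism.

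The main obstacle I expect lies in the book-keeping between linearization and rescaling: the hypothesis that our fields are linearized w.r.t.\ $\base$ is genuinely needed (through commutation with $r_\lambda$) to make $F$ project to a clean coordinate chart on $\base$, rather than to an \emph{a priori} unruly composition. A secondary delicate point is that the fiberwise restriction of $F$ must preserve the \emph{infinitesimal} foliation $\fol_p^\ell$, not merely send $\fol^\ell$-leaves to $\fol^\ell$-leaves; this is ensured by the parallel-transport-type behaviour of flows of linearized vector fields as observed in \cite{Mendes-Radeschi} and already exploited in the proof of Proposition \ref{P:extension}.
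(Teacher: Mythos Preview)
Your proof is correct and follows essentially the same route as the paper: extend the $\partial_{y_i}$ to linearized vector fields $Y_i$ tangent to $\mathcal{T}$, extend the basic $\partial_{z_j}$ via Proposition~\ref{P:extension} to $X_j^\ell$, and define the chart as the composition of their flows in the order $\Phi\circ\Psi$. Your justification that $F$ descends to a chart on $\base$ (via commutation with $r_\lambda$) and your separation-of-leaves argument are more explicit than the paper's brief appeal to ``dimensional reasons,'' but the strategy and ingredients are identical.
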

\begin{proof}
Let $W$ be a coordinate neighbourhood of $\base$ around $p$, with a foliated diffeomorphism $\varphi:(W,\fol|_W)\to (\DD^k\times \DD^{m-k},\DD^k\times\{pts.\})$. Let ${\partial \over \partial y_1}, \ldots {\partial \over \partial y_k}$ denote a basis of vector fields in $W$ tangent to the leaves of $\fol|_W$, and let $V_1,\ldots V_k$ denote vector fields on $\pr^{-1}(W)$, linearized with respect to $\base$, extending ${\partial \over \partial y_i}$, $i=1,\ldots k$ and spanning the foliation $\mathcal{T}$. Similarly, let ${\partial \over \partial x_1},\ldots {\partial \over \partial x_{m-k}}$ denote a basis of basic vector fields in $W$ normal to the leaves, and let $X^\ell_1,\ldots X^\ell_{m-k}$ denote linearized vector fields in $\pi^{-1}(W)$ defined as in Proposition \ref{P:extension}, extending the vectors ${\partial \over \partial x_i}$, $i=1,\ldots m-k$. Finally, define $\Phi_i^t$ and $\Psi_i^t$ the flows of $V_i$ and $X^\ell_i$ respectively, after time $t$, and let
\begin{align*}
G:\DD^k\times \DD^{m-k}\times U_p&\longrightarrow \pr^{-1}(W)\\
((t_1,\ldots,t_k),(s_1,\ldots, s_{m-k}),q)&\longmapsto \Phi_k^{t_k}\circ\ldots\circ \Phi_1^{t_1}\circ \Psi_{m-k}^{s_{m-k}}\circ\ldots \circ \Psi_1^{s_1}(q).
\end{align*}
Since the $V_i$ and $X^\ell_i$ are linearized, they take fibers of $\DD^k\times \DD^{m-k}\times U_p\to \DD^k\times \DD^{m-k}$ to fibers of $\pr:\pr^{-1}(W)\to W$. Since the flows $\Psi_i$ send the leaves of $\fol^\ell$ to leaves, and the flows $\Phi_i$ take leaves of $\fol^\ell$ to themselves, the leaves of $\DD^k\times (\DD^{m-k},\{\textrm{pts.}\})\times (U_p, \fol^\ell_p)$ are sent into the leaves of $\fol^\ell$. Since the differential $dG$ is invertible at $(0,0,p)\in\DD^k\times \DD^{m-k}\times U_p$, it is a diffeomorphism around $G(0,0,p)=p$ and, by dimensional reasons, the leaves of $(\DD^k\times \DD^{m-k}\times U_p, \DD^k\times\{\textrm{pts.}\}\times \fol^\ell_p)$ are mapped diffeomorphically onto the leaves of $(\pr^{-1}(W),\fol^\ell|_{\pr^{-1}(W)})$.

%
\end{proof}


\subsection*{The local closure of $\fol^\ell$}
Even though $(U_p,\fol^\ell_p)$ is homogeneous for every $p\in \base$, it might be the case that its leaves are not closed, which happens when the group $H_p\In \OO(U_p)$ defined in Proposition \ref{P:linearized-infinitesimal} is not closed. To obviate this problem we define a new foliation $\widehat{\fol}^\ell$, called the \emph{local closure} of $\fol^\ell$, such that $\fol^\ell\subset \widehat{\fol}^\ell\subset \ol{\fol^\ell}$ and whose restriction $\widehat{\fol}^\ell_p$ to each $\pr$-fiber $U_p$ is homogeneous and closed.

Recall that $\fol^\ell$ is defined by the orbits of the pseudogroup $\mathsf{D}$ of local diffeomorphisms, generated by the flows of linearized vector fields. For each $q\in U_p$, consider  the closure $\ol{H}_p$ of $H_p$ in $\OO(U_p)$, and define the $\widehat{\fol}^\ell$-leaf $\widehat{L}_q$ through $q$ to be the $\mathsf{D}$-orbit of $\ol{H}_p\cdot q$:
\[
\widehat{L}_q=\{q'=\Phi(h\cdot q)\mid \Phi\in \mathsf{D},\, h\in \ol{H}_p\}
\]
Let $\sim$ denote the relation $q\sim q'$ if and only if $q'=\Phi(h\cdot q)$ for some $\Phi\in \mathsf{D}$ and $h\in \ol{H}_p$. In this way, the leaf of $\widehat{\fol}^\ell$ through $q$ can be rewritten as $\{q'\in U\mid q'\sim q\}$. As for the other foliations, for every $p\in \base$ we define $(U_p,\widehat{\fol}^\ell_p)$ to be the partition of $U_p$ into the connected components, of the intersections of $U_p$ with the leaves in $\widehat{\fol}^\ell$.

\begin{proposition}\label{P:prop-loc-closure}
The following hold:
\begin{enumerate}
\item $\widehat{\fol}^\ell$ is a well defined partition of $U$.
\item For every $p\in \base$ the leaves of $\widehat{\fol}^\ell_p$ are the orbits of $\ol{H}_p$ on $U_p$.
\end{enumerate}
\end{proposition}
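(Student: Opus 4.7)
The plan is to reduce both claims to a single Commutation Lemma: for every $\Phi\in\mathsf{D}$ with $\Phi(q)=q'$, $q\in U_p$, $q'\in U_{p'}$, and every $h\in\ol{H}_p$, there exists $\tilde{h}\in\ol{H}_{p'}$ such that $\Phi\circ h=\tilde{h}\circ\Phi$ on a neighborhood of $q$ in $U_p$. The first step is to establish this lemma inside a local product given by Proposition \ref{P:local-form}. In the model $\DD^k\times\DD^{m-k}\times U_p$, elements of $\mathsf{D}$ decompose as translations in the $\DD^k$ factor (coming from flows of $\mathcal{T}$-vector fields, which correspond to $\partial_{y_i}$) composed with actions on the $U_p$ factor by $H_p$ (coming from flows of linearized vertical vector fields, by Proposition \ref{P:linearized-infinitesimal}). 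Translations commute trivially with any fiberwise $\ol{H}_p$-action, while the $H_p$-action normalizes $\ol{H}_p$: conjugation by $H\in H_p$ carries $H_p$ into itself, and by continuity of conjugation in the compact group $\OO(U_p)$, it carries $\ol{H}_p=\overline{H_p}$ into itself. A general $\Phi\in\mathsf{D}$ is a finite composition of flows, each contained in a local product, and iterating the local commutation yields the full lemma.

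Given the Commutation Lemma, part (1) is a short calculation. Reflexivity follows from $\Phi=\mathrm{id}$, $h=\mathrm{id}$. For symmetry, from $q'=\Phi(h\cdot q)$ and $h=\Phi^{-1}\tilde{h}\Phi$ one obtains $q=h^{-1}\Phi^{-1}(q')=\Phi^{-1}(\tilde{h}^{-1}\cdot q')$ with $\tilde{h}^{-1}\in\ol{H}_{p'}$, so $q\sim q'$. For transitivity, given $q'=\Phi(h\cdot q)$ and $q''=\Phi'(h'\cdot q')$, the commutation relation $h'\Phi=\Phi\tilde{h}'$ with $\tilde{h}'\in\ol{H}_p$ gives $q''=\Phi'\Phi(\tilde{h}'h\cdot q)$, which is of the required form. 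Hence $\sim$ is an equivalence relation and $\widehat{\fol}^\ell$ is a well-defined partition of $U$.

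For part (2), the same commutation shows that $\widehat{L}_q\cap U_p$ is $\ol{H}_p$-invariant: if $q''=\Phi(h\cdot q)\in U_p$ and $h'\in\ol{H}_p$, then $h'\cdot q''=h'\Phi(h\cdot q)=\Phi(\tilde{h}'h\cdot q)\in\widehat{L}_q$, using the Commutation Lemma applied to $\Phi$ (whose initial and final fibers both equal $U_p$). Hence $\widehat{L}_q\cap U_p$ is a disjoint union of $\ol{H}_p$-orbits. Since $\ol{H}_p$ is compact (a closed subgroup of $\OO(U_p)$) and connected (as the closure of the connected group $H_p$), each orbit is closed and connected. Distinct orbits are disjoint closed sets, hence separated, so the $\ol{H}_p$-orbits are exactly the connected components of $\widehat{L}_q\cap U_p$.

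The main technical obstacle is the global form of the Commutation Lemma. The statement inside one local product is immediate, but propagating it along a chain of local products requires knowing that the identification of fibers $U_{p_i}\to U_{p_{i+1}}$ obtained from the local product structure conjugates $\ol{H}_{p_i}$ to $\ol{H}_{p_{i+1}}$. This reduces to the statement that any foliated linear isometry between the homogeneous infinitesimal foliations $(U_{p_i},\fol^\ell_{p_i})$ and $(U_{p_{i+1}},\fol^\ell_{p_{i+1}})$ intertwines the identity components of their automorphism groups (in the spirit of Proposition \ref{P:orbit-like-invariant}), and then taking closures using continuity of conjugation.
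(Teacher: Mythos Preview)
Your treatment of part~(1) is correct and is essentially the paper's argument. The Commutation Lemma you isolate is exactly what the paper uses, stated there as: any $\Phi\in\mathsf{D}$ restricts to a foliated isometry $(U_p,\fol^\ell_p)\to(U_{\Phi(p)},\fol^\ell_{\Phi(p)})$, hence conjugates $H_p$ into $H_{\Phi(p)}$ and, by continuity, $\ol{H}_p$ into $\ol{H}_{\Phi(p)}$. The paper obtains this in one line from the fact that flows of linearized vector fields carry fibers isometrically to fibers; your detour through the local product of Proposition~\ref{P:local-form} gets to the same place but is unnecessary for this step.

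Your argument for part~(2) has a genuine gap. From the fact that $\widehat{L}_q\cap U_p$ is a disjoint union of $\ol{H}_p$-orbits, each compact and connected, you conclude that the orbits are the connected components. This inference is invalid: a space can be partitioned into pairwise disjoint closed connected subsets without those subsets being its connected components. (Think of $[0,1]$ partitioned into its points, or a sphere foliated by great circles: each leaf is closed and connected, yet the ambient space is connected.) Pairwise separation of closed sets does not give separation of one orbit from the union of all the others, since that union need not be closed. What is missing is a reason why a connected subset of $\widehat{L}_q\cap U_p$ cannot meet more than one orbit.

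The paper supplies exactly this missing ingredient by a counting argument: it shows that the set of $\ol{H}_p$-orbits appearing in $\widehat{L}_q\cap U_p$ is the image of a surjection from $\pi_1(L_p,p)$ (built by lifting loops in $L_p$ to elements of $\mathsf{D}_p$ via linearized vector fields, and tracking where they send a fixed orbit), hence is at most countable. Once the orbits are countable, any connected component mapping to the Hausdorff orbit space $U_p/\ol{H}_p$ has connected, countable image, which must then be a single point; equivalently, a connected space cannot be written as a countable disjoint union of at least two nonempty closed subsets. Your proof needs this $\pi_1$ bound, or some equivalent cardinality/local-structure input, to close the gap.
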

\begin{proof}
1. One must prove that the relation $\sim$ defined above is an equivalence relation. For this, notice that, since any $\Phi\in \mathsf{D}$ defines a foliated isometry between $(U_p,\fol^\ell_p)$ and $(U_{\Phi(p)},\fol^\ell_{\Phi(p)})$ for any $p\in \base$, in particular it defines a foliated isometry between the respective closures $(U_p,\ol{H}_p)$ and $(U_{\Phi(p)},\ol{H}_{\Phi(p)})$. In particular, for any $h\in \ol{H}_p$ and $\Phi\in \mathsf{D}$, one has $h'=\Phi\circ h\circ \Phi^{-1}\in \ol{H}_{\Phi(p)}$.

- Reflexivity of $\sim$: if $q'\sim q$ then $q'=\Phi(h(q))$ for some $h\in \ol{H}_p$ and $\Phi\in \mathsf{D}$. Then $q'=h'(\Phi(q))$, where $h'=\Phi\circ h\circ \Phi^{-1}\in \ol{H}_{\Phi(p)}$, and therefore $q=\Phi^{-1}((h')^{-1}q')$, that means $q\sim q'$.

- Transitivity of $\sim$: if $q'\sim q$ and $q''\sim q'$ then $q'=\Phi(h(q))$ and $q''=\Psi(g(q'))$ for some $h\in \ol{H}_p$, $g\in \ol{H}_{\Phi(p)}$, and $\Phi,\Psi\in \mathsf{D}$. Then $q''=(\Psi\circ \Phi)((g' \circ h)(q))$, where $g'=\Phi^{-1}\circ g\circ \Phi\in \ol{H}_p$, and therefore $q''\sim q$.
\\

2. Let $L'$ denote a leaf of $\widehat{\fol}^\ell$. From (1), the intersection of $L'$ with $U_p$ is a union of orbits of $\ol{H}_p$. On the other hand, we claim that the intersection $L'\cap U_p$ consists of countably many orbits of $\ol{H}_p$, so that each connected component of such intersection must consists of a single $\ol{H}_p$-orbit. From the definition of $\widehat{\fol}^\ell$, it is enough to prove that the subgroup $\mathsf{D}_p\subset \mathsf{D}$ of diffeomorphisms fixing $p$ moves every $\ol{H}_p$-orbit in $L'\cap U_p$ to at most countably many orbits. For this, consider a piecewise smooth loop $\gamma:[0,1]\to L_p$ with $\gamma(0)=\gamma(1)=p$. Using linearized vector fields with $\gamma$ as integral curve, one can construct a continuous path $\Phi_t:[0,1]\to \mathsf{D}$ of diffeomorphisms such that $\Phi_0=id_U$ and $\Phi_t(p)=\gamma(t)$, as described in \cite[Cor. 7]{Mendes-Radeschi}. Fixing  some $\ol{H}_p$-orbit $\mathcal{O}$ in $L'\cap U_p$, its image $\Phi_1(\mathcal{O})$ is again some $\ol{H}_p$-orbit, which only depends on the class $[\gamma]\in \pi_1(L_p,p)$ and not on the actual path $\gamma$, nor on the specific choice of $\Phi_t$. This gives a map
\[
\partial: \pi_1(L_p,p)\to \{\ol{H}_p\textrm{-orbits in }L'\cap U_p\}
\]
This map admits a section, namely: for every orbit $\mathcal{O}'$ in $L'\cap U_p$, take a path $\gamma$ in $L'$ from a point in a (fixed) orbit $\mathcal{O}$ to a point in $\mathcal{O}'$. Under the projection $\pr:U\to B$, the composition $\pr\circ\gamma$ is a loop in $L_p$. The section of $\partial$ sends $\mathcal{O}'$ to $[\pr\circ\gamma]\in \pi_1(L_p,p)$. In particular, the map $\partial$ is surjective, and therefore the set of $\ol{H}_p$-orbits in $L'\cap U_p$ has at most the cardinality of $\pi_1(L_p,p)$, which is at most countable since $L_p$ is a manifold.
\end{proof}

As a corollary of Propositions \ref{P:prop-loc-closure} and \ref{P:local-form}, one gets the following:
\begin{corollary}\label{C:local-form}
Let $(U,\fol)$ be a singular Riemannian foliation as in Section \ref{S:setup}, let $\fol^\ell$ be its linearized foliation and $\widehat{\fol}^\ell$ the local closure. Then $\widehat{\fol}^\ell$ is a singular foliation with complete leaves. Moreover, around each point $p\in B$ there is a neighbourhood $W$ of $p$ in $B$ such that $(\pr^{-1}(W),\widehat{\fol}^\ell|_{\pr^{-1}(W)})$ is foliated diffeomorphic to a product
\[
(\DD^k\times \DD^{m-k}\times U_p,\DD^k\times\{\textrm{pts.}\}\times \{\textrm{orbits of }\ol{H}_p\In \OO(U_p)\}),
\]
which can be given the structure of a singular Riemannian foliation. 
\end{corollary}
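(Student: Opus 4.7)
The plan is to transport the local product structure of Proposition~\ref{P:local-form} from $\fol^\ell$ to $\widehat{\fol}^\ell$: once one verifies that the diffeomorphism $G$ of that proposition is also foliated with respect to $\widehat{\fol}^\ell$ (with the product foliation on the domain replaced by $\DD^k\times\{\textrm{pts.}\}\times\{\textrm{orbits of }\ol{H}_p\}$), the three claims of the corollary follow essentially for free from the product description.

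First I would recall that $G$ is built from compositions of flows $\Phi_i^{t_i}$ and $\Psi_i^{s_i}$ of the linearized vector fields $V_i$ and $X_i^\ell$, each of which lies in the pseudogroup $\mathsf{D}$. Fixing $(y_0,x_0,q_0)\in \DD^k\times\DD^{m-k}\times U_p$ and $h\in \ol{H}_p$, one has $G(y_0,x_0,hq_0)=\Phi(hq_0)$ for a common $\Phi\in\mathsf{D}$; by the very definition of $\widehat{\fol}^\ell$ the points $\{\Phi(hq_0)\mid h\in\ol{H}_p\}$ all lie in the single leaf $\widehat{L}_{q_0}$. Letting $y$ vary in $\DD^k$ moves these points further by flows in $\mathsf{D}$, which again preserves the $\widehat{\fol}^\ell$-leaf. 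A dimension count, based on Proposition~\ref{P:prop-loc-closure}(2) applied on the fiber, then shows that $G$ carries each would-be plaque $\DD^k\times\{x_0\}\times \ol{H}_p\cdot q_0$ diffeomorphically onto a plaque of $\widehat{\fol}^\ell|_{\pr^{-1}(W)}$.

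From this product description the remaining assertions of the corollary follow quickly. The partition $\DD^k\times\{\textrm{pts.}\}\times\{\textrm{orbits of }\ol{H}_p\}$ is spanned at every point by the coordinate fields $\partial/\partial y_i$ together with the linear Killing fields on $U_p$ induced by the Lie algebra of $\ol{H}_p$; pushing these forward through $G$ yields smooth vector fields spanning $\widehat{\fol}^\ell$ on $\pr^{-1}(W)$, whence $\widehat{\fol}^\ell$ is a singular foliation. For completeness of leaves I would observe that $\ol{H}_p$ is closed in the compact Lie group $\OO(U_p)$, so its orbits on $U_p$ are compact embedded submanifolds; each local plaque $\DD^k\times\{x_0\}\times\ol{H}_p\cdot q_0$ is therefore a complete embedded submanifold of the product, and assembling these plaques over a cover of $\base$ gives completeness of the global leaves in $U$. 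Finally, equipping $\DD^k\times\DD^{m-k}$ with the flat Euclidean metric and $U_p$ with its flat metric $\metric_p$, the resulting product metric makes the displayed partition a singular Riemannian foliation, since $\ol{H}_p$ acts isometrically on $(U_p,\metric_p)$ and the product of a regular Riemannian foliation with a singular Riemannian foliation is again such.

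The main obstacle is the intermediate step, namely verifying that the same diffeomorphism $G$ which trivializes $\fol^\ell$ automatically trivializes the enlarged partition $\widehat{\fol}^\ell$. The conceptual content here is the compatibility built into Proposition~\ref{P:prop-loc-closure}: the flows used to construct $G$ are elements of $\mathsf{D}$, and these conjugate $\ol{H}_p$ to $\ol{H}_{\Phi(p)}$, so fiberwise motion by $\ol{H}_p$ commutes, modulo the equivalence relation $\sim$, with horizontal motion along $\base$. This is exactly what allows the $\DD^k$- and $\DD^{m-k}$-factors of the product to be generated by $\mathsf{D}$-flows while the $U_p$-factor is generated by the closed group $\ol{H}_p$, producing a single coherent product foliation.
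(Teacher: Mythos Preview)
Your overall strategy---transporting the product model of Proposition~\ref{P:local-form} from $\fol^\ell$ to $\widehat{\fol}^\ell$ via the same diffeomorphism $G$, and then reading off the three conclusions---is exactly what the paper intends, since the corollary is stated as an immediate consequence of Propositions~\ref{P:local-form} and~\ref{P:prop-loc-closure}. However, there is a genuine error in your execution: you assert that \emph{all} the flows used to build $G$, namely both the $\Phi_i^{t_i}$ and the $\Psi_j^{s_j}$, lie in the pseudogroup $\mathsf{D}$. This is false. By definition $\mathsf{D}$ is generated by the flows of linearized \emph{vertical} vector fields $V^\ell$ with $V\in\mathfrak{X}(U,\fol)$; the $V_i$ are of this type, but the $X_j^\ell$ are linearizations of \emph{basic} (foliated, transverse) vector fields and are not vertical. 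If the $\Psi_j^{s_j}$ really were in $\mathsf{D}$, then moving in the $\DD^{m-k}$-direction would keep you in the same $\widehat{\fol}^\ell$-leaf, and the product would collapse to $\DD^k\times\DD^{m-k}\times\{\textrm{orbits of }\ol{H}_p\}$ rather than $\DD^k\times\{\textrm{pts.}\}\times\{\textrm{orbits of }\ol{H}_p\}$.

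The repair is short but uses a different mechanism for the $\DD^{m-k}$-factor. From Proposition~\ref{P:local-form}, $G$ restricted to each fiber $\{y_0\}\times\{x_0\}\times U_p$ is a foliated diffeomorphism $(U_p,\fol^\ell_p)\to(U_{p'},\fol^\ell_{p'})$, and since all the flows involved are linearized it is in fact an isometry between the flat fibers (this is the content of the third bullet in the proof of Proposition~\ref{P:extension}). Hence it carries $H_p$-orbits to $H_{p'}$-orbits and, by taking closures, $\ol{H}_p$-orbits to $\ol{H}_{p'}$-orbits; by Proposition~\ref{P:prop-loc-closure}(2) the latter are precisely the leaves of $\widehat{\fol}^\ell_{p'}$. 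The $\DD^k$-direction is handled as you say, since the $\Phi_i^{t_i}$ \emph{do} lie in $\mathsf{D}$ and therefore preserve each $\widehat{\fol}^\ell$-leaf. Combining the two and counting dimensions gives that $G$ maps each set $\DD^k\times\{x_0\}\times(\ol{H}_p\cdot q_0)$ onto a plaque of $\widehat{\fol}^\ell$, which is the product description you want. Your remaining deductions (smooth spanning vector fields, completeness, and the singular Riemannian structure on the model) then go through as written.
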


Once it is shown that $\widehat{\fol}^\ell$ is also a transnormal system with respect to some metric, then by the corollary above it is globally a singular Riemannian foliation.

\section{A new metric}\label{S:proof}

Let $\mathcal{T},\,\mathcal{N},\,\mathcal{K}$ be the distributions as in the previous section. Clearly, one has $TU=\mathcal{T}\oplus\mathcal{N}\oplus\mathcal{K}$. 

Define now the new metric $\metric^\ell$ on $U$, as the metric defined by the following properties:
\begin{itemize}
\item $\mathcal{T}\oplus\mathcal{N}$ and $\mathcal{K}$ are orthogonal with respect to $\metric^\ell$.
\item $\metric^\ell|_{\mathcal{T}\oplus \mathcal{N}}=\pr^*\metric_\base$, where $\metric_\base$ denotes the restriction of the original metric on $\base$. In particular, $\mathcal{T}$ and $\mathcal{N}$ are also orthogonal to one another.
\item For any $q\in U_p$, recall that $\mathcal{K}_q=T_qU_p$, and define $\metric^\ell|_{\mathcal{K}_q}=\metric_p$ the flat metric on $U_p$ induced from $\exp_p:\nu_p\base\to U_p$.
\end{itemize}

These conditions characterize the metric $\metric^\ell$ uniquely. The most useful property of this metric is the following.

\begin{proposition}\label{P:new-metric}
The triples $(U,\metric^\ell,\fol^\ell)$ and $(U,\metric^\ell,\widehat{\fol}^\ell)$ are singular Riemannian foliations.
\end{proposition}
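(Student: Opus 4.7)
Since $\fol^\ell$ and $\widehat{\fol}^\ell$ are already known to be singular foliations (from Section~\ref{SS:lin-foliation} and Corollary~\ref{C:local-form}, respectively), the task reduces to verifying the transnormal property with respect to $\metric^\ell$. I treat $\widehat{\fol}^\ell$ in detail; the argument for $\fol^\ell$ is identical, with $H_p$ replacing $\ol{H}_p$. The plan is to work locally and exhibit, around each point of $\base$, a Riemannian product structure that displays $\widehat{\fol}^\ell$ as a product of two manifestly Riemannian foliations.

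Fix $p \in \base$ and let $G \colon \DD^{k} \times \DD^{m-k} \times U_p \to \pr^{-1}(W)$ be the foliated diffeomorphism from Corollary~\ref{C:local-form}, so that $G$ pulls $\widehat{\fol}^\ell$ back to the product foliation $\fol|_W \times \{\ol{H}_p\text{-orbits}\}$ on $W \times U_p$ (identifying $W$ with $\DD^k \times \DD^{m-k}$). The first factor is a regular Riemannian foliation on $(W, \metric_\base|_W)$, since $\fol|_\base$ is; the second is a singular Riemannian foliation on $(U_p, \metric_p)$, because by Proposition~\ref{P:prop-loc-closure}(2) the group $\ol{H}_p$ is a closed subgroup of $\OO(U_p)$ acting by isometries of the flat metric $\metric_p$. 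A Riemannian product of Riemannian foliations is again Riemannian (product geodesics split into components, and transnormality is preserved factor-wise), so the claim reduces to showing that $G$ is an isometry from the product metric $\metric_\base|_W \oplus \metric_p$ on $W \times U_p$ to $(\pr^{-1}(W), \metric^\ell)$.

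To verify this isometry, I would unwind the three defining properties of $\metric^\ell$ under $G$. Since $\pr \circ G(y, x, q) = \varphi(y, x)$ is independent of $q$, the map $G$ sends each vertical slice $\{(y, x)\} \times U_p$ into the fiber $U_{\varphi(y, x)}$, and this restriction is a composition of flows of linearized vector fields, hence an isometry $(U_p, \metric_p) \to (U_{\varphi(y, x)}, \metric_{\varphi(y, x)})$ by the standard isometry property for linearized vector fields (cf.~\cite{Mendes-Radeschi}); together with $\metric^\ell|_{\mathcal{K}_q} = \metric_{\pr(q)}$, this matches the fiber factor of the product metric. The composition $\pr \circ G$ is the coordinate chart $W \cong \DD^k \times \DD^{m-k}$, and combined with $\metric^\ell|_{\mathcal{T}\oplus\mathcal{N}} = \pr^*\metric_\base$ this gives the $\metric_\base|_W$ contribution on the horizontal directions. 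Finally, the orthogonality of the two contributions in $G^*\metric^\ell$ follows from the defining orthogonality $\mathcal{K} \perp \mathcal{T}\oplus\mathcal{N}$ of $\metric^\ell$.

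The main technical obstacle lies in this last point: one must verify that the horizontal pushforwards $G_*\partial_{y_i}$ and $G_*\partial_{x_j}$ at a general point $G(y, x, q)$ actually lie in $\mathcal{T}\oplus\mathcal{N}$, so that the $\metric^\ell$-orthogonality kills the potential cross terms with the vertical pushforwards. This reduces to the statement that the distributions $\mathcal{T}$ and $\mathcal{N}$ are preserved by the flows of the linearized vector fields $V_i$ and $X_j^\ell$ used to build $G$. I would establish this from the $r_\lambda$-invariance of linearized vector fields---which is preserved under pushforward by flows of linearized vector fields, so that the span of linearized extensions of $\widehat{\mathcal{T}}|_\base$ (respectively $\widehat{\mathcal{N}}|_\base$) is flow-invariant---combined with the fact that these flows restrict to foliated diffeomorphisms of the regular Riemannian foliation $(\base, \fol|_\base)$ preserving its tangent and normal distributions.
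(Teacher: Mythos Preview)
Your approach---showing that the local trivialization $G$ of Corollary~\ref{C:local-form} is a Riemannian isometry from the product $(W\times U_p,\metric_\base\oplus\metric_p)$ onto $(\pr^{-1}(W),\metric^\ell)$---is different from the paper's, and it does not go through as written. The paper instead invokes a criterion (Proposition~2.14 of \cite{Alexandrino-desingularization}) that reduces the claim to two checks: (i) $\widehat{\fol}^\ell$ is a singular Riemannian foliation for \emph{some} metric, and (ii) on each stratum its restriction is a regular Riemannian foliation with respect to $\metric^\ell$. Point~(i) is Corollary~\ref{C:local-form}; point~(ii) is verified by splitting a basic vector field along a leaf $\widehat{L}_q$ into its $\mathcal{N}$-component and its $(\nu\widehat{L}_q\cap\mathcal{K})$-component and checking that each has constant $\metric^\ell$-norm, using respectively that $(\base,\metric_\base,\fol_\base)$ is Riemannian and that the flows of linearized vertical fields carry fibers isometrically to fibers.

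The gap in your argument is exactly where you locate it, but your proposed fix does not close it. For $G$ to be an isometry you need $G_*\partial_{y_i},\,G_*\partial_{x_j}\in\mathcal{T}\oplus\mathcal{N}$ at every point, which amounts to the horizontal distribution $\mathcal{T}\oplus\mathcal{N}$ being preserved by the flows $\Phi_i^t,\Psi_j^s$, i.e.\ integrable. Nothing in the construction guarantees this: $\mathcal{T}\oplus\mathcal{N}$ is merely a connection on the bundle $\pr:U\to\base$, and it may well have curvature. Your $r_\lambda$-invariance argument shows only that the pushforwards $(\Phi_i^t)_*V_j$, etc., are $r_\lambda$-invariant vector fields with the correct restriction to $\base$; this does \emph{not} force them into $\mathcal{T}\oplus\mathcal{N}$, because one can always add an $r_\lambda$-invariant vertical linear field (these vanish along $\base$) without disturbing either property. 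Concretely, the $\mathcal{K}$-component of a bracket such as $[V_i,V_j]$ or $[X_i^\ell,X_j^\ell]$ is precisely such a linear vertical field and is generically nonzero. Put differently: if $G$ were an isometry then $(\pr^{-1}(W),\metric^\ell)$ would be a Riemannian product, whereas the definition of $\metric^\ell$ only makes $\pr$ a Riemannian submersion with flat fibers---a strictly weaker condition.
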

\begin{proof}
The arguments for $\fol^\ell$ and $\widehat{\fol}^\ell$ are ideantical, therefore we will only check the Proposition for $(U,\metric^\ell,\widehat{\fol}^\ell)$ (which is the only case we need for the Main Theorem anyway).
 Moreover, the statement is local in nature, therefore it is enough to prove the statement on certain open sets covering the whole of $U$. For any point $p\in \base$, let $W$ denote a neighbourhood of $p$ in $\base$ and $\pr^{-1}(W)$ a neighbourhood of $p$ in $U$. We need to check that $(\pr^{-1}(W),\metric^\ell,\widehat{\fol}^\ell)$ is a singular Riemannian foliation. To prove this, we apply Proposition 2.14 of \cite{Alexandrino-desingularization} which states that it is enough to check two conditions:
\begin{enumerate}
\item $(\pr^{-1}(W),g',\widehat{\fol}^\ell)$ is a singular Riemannian foliation with respect to \emph{some} Riemannian metric $g'$.
\item For every stratum $\Sigma\subset \pr^{-1}(W)$ (i.e. union of leaves of the same dimension), the restriction of $\widehat{\fol}^\ell$ to $\Sigma$ is a (regular) Riemannian foliation. 
\end{enumerate}

The first condition is satisfied by Corollary \ref{C:local-form}. The second condition is equivalent to checking that, for every leaf $\widehat{L}_q$ of $\widehat{\fol}^\ell|_{\pr^{-1}(W)}$ and every basic vector field $X$ along $\widehat{L}_q$ tangent to the stratum through $\widehat{L}_q$, the norm $\|X\|_{\metric^\ell}$ is constant along $\widehat{L}_q$.

By definition of the metric $\metric^\ell$, the space $\nu \widehat{L}_q$ is given by $\mathcal{N}|_{\widehat{L}_q}\oplus (\nu \widehat{L}_q \cap \mathcal{K})$. From Proposition \ref{P:extension}, along $\widehat{L}_q$ the space $\mathcal{N}$ is spanned by linearized vector fields $X^\ell_i$, which are then $\metric^\ell$-basic (i.e., foliated \emph{and} $\metric^\ell$-orthogonal to the leaves). In particular, any basic vector field $\ol{X}$ along $\widehat{L}_q$ splits as a sum $\ol{X}=\ol{X}_1+\ol{X}_2$, where $\ol{X}_1$ is tangent to $\mathcal{N}$, $\ol{X}_2$ is tangent to $\mathcal{N}':=\nu \widehat{L}_q \cap \mathcal{K}$, and $\metric^\ell(\ol{X}_1,\ol{X}_2)=0$. Therefore, it is enough to check independently that for every basic vector field $\ol{X}$ along $\widehat{L}_q$, tangent to either $\mathcal{N}$ or $\mathcal{N}'$, the norm of $\ol{X}$ is constant along $\widehat{L}_q$.

If $\ol{X}$ is tangent to $\mathcal{N}$, then by the construction in Proposition \ref{P:extension} it projects to some basic vector field $X$ along $\widehat{L}_p\subset B$. Since  $(B,\metric_B,\fol|_B)$ is a Riemannian foliation, the norm $\|X\|_{\metric_B}$ is constant along $\widehat{L}_p$. By the construction of the metric $\metric^\ell$, one has $\|\ol{X}\|_{\metric^\ell}=\|X\|_{\metric_B}$ and, therefore, the norm of $\ol{X}$ is constant along $\widehat{L}_q$. 

If $\ol{X}$ is tangent to $\mathcal{N}'$, then it is tangent to any fiber $U_{p'}$, $p'\in \widehat{L}_p$. The restriction $\ol{X}|_{U_{p'}}$ is a basic vector field of $(U_{p'},\widehat{\fol}^\ell_{p'})$ along $\widehat{L}_{q}\cap U_{p'}$, and therefore the norm $\|\ol{X}|_{U_{p'}}\|_{\metric_{p'}}$ is locally constant along $\widehat{L}_{q}\cap U_{p'}$. By the construction of $\metric^\ell$, it follows that  $\|\ol{X}|_{U_{p'}}\|_{\metric^\ell}$ is also locally constant along each $\widehat{L}_{q}\cap U_{p'}$. However, given two points $p', p''\in \widehat{L}_p$, and a vertical, foliated vector field $V^\ell$ whose flow $\Phi$ moves $p'$ to $p''$, one also has that $\Phi$ moves $U_{p'}$ isometrically to $U_{p''}$, and $\ol{X}|_{U_{p'}}$ to $\ol{X}|_{U_{p''}}$. In particular, $\|\ol{X}|_{U_{p'}}\|_{\metric^\ell}=\|\ol{X}|_{U_{p'}}\|_{\metric_{p'}}$ does not really depend on the point $p'\in \widehat{L}_p$, and it is actually constant along the whole leaf $\widehat{L}_q$.
\end{proof}

With this in place, one can finally prove the Main Theorem:
\begin{proof}[Proof of the Main Theorem]
Let $U$ be an $\epsilon$-tubular neighbourhood around the closure $\ol{L}$ of a leaf $L\in \fol$. Letting $B=\ol{L}$, we are under the assumptions of Section \ref{S:setup}. In particular, it is possible to define the linearized foliation $\fol^\ell$ on $U$, its local closure $\widehat{\fol}^\ell$, and the metric $\metric^\ell$ as in Proposition \ref{P:new-metric}. It is clear by construction that $\widehat{\fol}^\ell|_{\ol{L}}=\fol|_{L}$ and that the closure of $\widehat{\fol}^\ell$ is contained in the closure of $\fol$. Moreover, by Corollary \ref{C:local-form} the foliation $(U,\metric^\ell,\widehat{\fol}^\ell)$ is, locally around each point, foliated diffeomorphic to the orbit like foliation $(\DD^k\times \DD^{m-k}\times U_p,\DD^k\times\{\textrm{pts.}\}\times \widehat{\fol}^\ell_p)$. By Proposition \ref{P:orbit-like-invariant}, the foliation $(U,\metric^\ell,\widehat{\fol}^\ell)$ is orbit like as well, and this concludes the proof.
\end{proof}

\bibliographystyle{amsplain}

\end{document}